\newtheorem{theorem}{Theorem}
\newtheorem{lemma}[theorem]{Lemma}
\newtheorem{corollary}[theorem]{Corollary}
\newtheorem{conjecture}[theorem]{Conjecture}
\theoremstyle{definition}
\newtheorem*{remark}{Remark}
\newcommand{\R}{\mathbb{R}}
\newcommand{\tri}{\mathcal{T}}
\newcommand{\appropto}{\mathrel{\vcenter{
  \offinterlineskip\halign{\hfil$##$\cr
  \propto\cr\noalign{\kern2pt}\sim\cr\noalign{\kern-2pt}}}}}
\begin{document}

\title{\Large
    Computational topology and normal surfaces: \\
    Theoretical and experimental complexity bounds%
    \thanks{The first author is grateful to the
    Australian Research Council for their
    support under the Discovery Projects funding scheme
    (projects DP1094516 and DP110101104).
    This work was done while the second author was visiting
    The University of Queensland and he is thankful for the funding by
    FAPERJ for this visit.
    Computational resources used in this work
    were provided by the Queensland Cyber Infrastructure Foundation.}}
\author{Benjamin A.~Burton\thanks{%
            School of Mathematics and Physics,
            The University of Queensland,
            Brisbane, Australia.
            \texttt{bab@maths.uq.edu.au}} \\
        \and
        Jo{\~a}o Paix{\~a}o\thanks{%
            Department of Mathematics,
			Pontif{\'i}cia Universidade Cat{\'o}lica,
			Rio de Janeiro, Brazil,
            \texttt{jpaixao@mat.puc-rio.br}} \\
        \and
        Jonathan Spreer\thanks{%
            School of Mathematics and Physics,
            The University of Queensland,
            Brisbane, Australia.
            \texttt{j.spreer@uq.edu.au}}}
\date{}

\maketitle

\begin{abstract}
    In three-dimensional computational topology, the theory of normal surfaces
    is a tool of great theoretical and practical significance.
    Although this theory typically leads to exponential time algorithms,
    very little is known about how these algorithms perform in
    ``typical'' scenarios, or how far the best known theoretical bounds are
    from the real worst-case scenarios.
    Here we study the combinatorial and algebraic complexity of
    normal surfaces from both the theoretical and experimental viewpoints.
    Theoretically, we obtain new exponential lower bounds on the
    worst-case complexities in a variety of settings that are important
    for practical computation.
    Experimentally, we study the worst-case and average-case
    complexities over a comprehensive body of roughly three billion input
    triangulations.  Many of our lower bounds are the first known
    exponential lower bounds in these settings, and experimental
    evidence suggests that many of our theoretical lower bounds
    on worst-case growth rates may indeed be asymptotically tight.
\end{abstract}

%
%


\newcommand{\squeeze}{\vspace{-3mm}}

%
%

\section{Introduction}
\label{sec:intro}

In three-dimensional computational topology, many important problems are
solved by exponential-time algorithms: key examples include
Haken's algorithm for recognising the unknot \cite{haken61-knot}, or
breaking down a triangulated 3-manifold into its prime decomposition
\cite{jaco03-0-efficiency,jaco95-algorithms-decomposition}.  This is in
contrast to two dimensions in which many problems are solved in polynomial
time, or higher dimensions in which important topological problems
can become undecidable \cite{chiodo11-groups,markov60-insolubility}.

A common feature of many such three-dimensional algorithms, and the
source of both their solvability and their exponential running times,
is their use of \emph{normal surfaces}.  In essence, normal surfaces
are embedded 2-dimensional surfaces that intersect the surrounding
3-dimensional triangulation in a simple fashion.  Most importantly,
they describe topological features using combinatorial data, and are
thereby well-suited for algorithmic enumeration and analysis.

Amongst the most important normal surfaces are the \emph{vertex normal
surfaces}.  These correspond to the vertices of a high-dimensional
polytope (called the \emph{projective solution space}), and together they
generate the space of all possible normal surfaces within the input
triangulation.  Many topological algorithms begin by enumerating all
vertex normal surfaces in the input triangulation, and for many
problems (such as unknot recognition and prime decomposition)
this enumeration is in fact the main bottleneck for the entire algorithm.

One remarkable feature of many algorithms in three-dimensional
computational topology is that, although they have extremely large
theoretical worst-case complexity bounds, they appear to be
\emph{much} easier to solve in practice than these bounds suggest.
For example:
\begin{itemize}
    \item In 1980, Thurston asked if the Weber-Seifert dodecahedral
    space is Haken
    (the precise meaning of this is not important here)
    \cite{birman80-problems}.
    This long-standing question became a symbolic benchmark for
    computational topology, and was only resolved by
    computer proof after 30~years \cite{burton12-ws}.
    At the heart of the proof was an enumeration of all vertex normal
    surfaces in an $n=23$-tetrahedron triangulation:
    despite a prohibitive $O(16^n \times \mathrm{poly}(n))$-time
    enumeration algorithm (the best available at the time) and
    a best known bound of
    $O(3.303^n)$ vertex normal surfaces \cite{burton11-asymptotic},
    the enumeration ran in just
    $5\frac12$~hours with only 1751 vertex normal surfaces in total.

    \item The problem of unknot recognition is of particular interest.
    Modern derivatives of Haken's original algorithm
    \cite{jaco95-algorithms-decomposition} have an
    exponential time complexity \cite{hass99-knotnp},
    but there is a growing discussion as to whether a faster
    algorithm might exist \cite{dunfield11-spanning,hass12-conp}.
    Certainly unknot recognition lies in \textbf{NP} \cite{hass99-knotnp},
    and also \textbf{co-NP} if we assume the generalised Riemann hypothesis
    \cite{kuperberg11-conp}; moreover, recent algorithmic developments
    based on linear programming now exhibit an experimental polynomial-time
    behaviour \cite{burton12-regina}.
    Deciding whether unknot recognition has a worst-case polynomial-time
    solution is now a major open problem in computational topology.
\end{itemize}

This severe gap between theory and practice is still poorly understood.
There appear to be two causes:
(i)~the best theoretical complexity bounds are far from tight;
(ii)~``pathological'' inputs that exhibit high-complexity behaviour are
rare, with ``typical'' inputs often far easier to work with.

Proving such claims mathematically remains extremely elusive.
Obtaining tight complexity bounds requires a deep interaction between
topology, normal surfaces and polytope theory, and it is difficult to
avoid making very loose estimates in at least one of these areas.
Understanding ``typical'' behaviour (such as average- or
generic-case complexity) is hampered by our very limited
understanding of random 3-manifold triangulations: even the simple task
of generating a random 3-manifold triangulation with $n$ tetrahedra has
no known sub-exponential-time solution \cite{reiner12-mfo-problems}.
In this setting, experimental work plays a crucial role in understanding
the realistic performance of algorithms, as well as the innate
difficulty of the problems that they aim to solve.

In this paper we
focus our attention on the problem of enumerating all vertex normal surfaces
within a given $n$-tetrahedron input triangulation:
as mentioned earlier, this is a central component---and often the main
bottleneck---of many algorithms in computational 3-manifold topology.
Enumeration algorithms are still evolving
\cite{burton12-regina,burton10-tree}, and they are often
hand-tailored to a particular topological problem of interest.
For this reason we do not focus on the complexity of any specific algorithm,
but instead we study two aspects of normal surface theory
that affect and constrain all of these algorithms:
\begin{itemize}
    \item \emph{Combinatorial complexity:}
    We study the \emph{total number} of vertex normal surfaces within
    the input triangulation $\tri$, which we denote by
    $\sigma(\tri)$.  This is our main quantity of interest.
    It yields an immediate \emph{lower bound} for the
    time complexity of any enumeration algorithm, since it determines
    the output size.\footnote{%
        Specifically, since each vertex normal surface can be described
        in $O(n)$ space \cite{hass99-knotnp}, the output size is
        $O(\sigma(\tri) \times n)$.}
    Moreover, $\sigma(\tri)$ also factors into \emph{upper bounds}, since
    modern enumeration algorithms are designed to run faster in situations
    where $\sigma(\tri)$ is small \cite{burton10-tree}.\footnote{%
        The tree traversal enumeration algorithm (the current state of the art)
        has running time $O(4^n \sigma(\tri) \times \mathrm{poly}(n))$.}

    \item \emph{Algebraic complexity:}
    As detailed in Section~\ref{sec:prelim}, each normal surface is
    described by a non-negative integer vector in $\R^{7n}$ (or in some
    settings, $\R^{3n}$).  We investigate the \emph{maximum coordinate}
    of this vector of any vertex normal surface within the input triangulation $\tri$,
    which we denote by $\kappa(\tri)$.
    This quantity is important for the \emph{implementation} of
    enumeration algorithms, since
    it affects whether we can work with fast native machine integer
    types or whether we must fall back to significantly more expensive
    arbitrary-precision integer arithmetic \cite{burton10-tree}.
    Moreover, $\kappa(\tri)$ features in algorithms that
    extend or even avoid the enumeration problem:
    \begin{itemize}
        \item Some algorithms, such as recognising small Seifert fibred
        spaces \cite{rubinstein04-smallsfs},
        require the complete enumeration of not just vertex normal
        surfaces but a much larger ``lattice'' of normal surfaces whose
        size is a function of $\kappa(\tri)$.
        \item Some algorithms, such as determining the crosscap number
        of a knot \cite{burton11-crosscap}, avoid vertex enumeration
        entirely by solving an integer program instead; here the bounds
        on $\kappa(\tri)$
        feature as coefficients in the integer program, and directly
        affect whether the program can be solved using off-the-shelf
        integer programming software.
    \end{itemize}
\end{itemize}

In summary, by focusing our attention on the quantities
$\sigma(\tri)$ and $\kappa(\tri)$, we learn not only about the behaviour
of current enumeration algorithms, but also about the intrinsic limits
and behaviour of the problem that they seek to solve.

We approach these combinatorial and algebraic quantities $\sigma(\tri)$
and $\kappa(\tri)$ through both theory and experiment.
Theoretically, we construct infinite ``pathological'' families of
triangulations in Section~\ref{sec:lower}
that establish \emph{exponential lower bounds} on the
worst-case scenario for both $\sigma(\tri)$ and $\kappa(\tri)$.
Experimentally, we examine both the \emph{worst case} and
\emph{average case} behaviour of these quantities
in Section~\ref{sec:expt}, using a comprehensive census of billions of
input triangulations.

Such results are highly important for practitioners in
three-dimensional computational topology, particularly given the
exponential nature of many key algorithms.  Despite this, just one
preliminary study of this type appears in the literature
\cite{burton10-complexity}.
This scarcity of results has two causes:
\begin{enumerate}[(i)]
    \item
    \emph{The lack of large, comprehensive censuses of both ``typical'' and
    ``atypical'' triangulations.}

    There are many censuses of 3-manifold triangulations in the
    literature, but these typically focus on well-structured
    triangulations with special properties (such as minimal
    triangulations, or irreducible manifolds).  Such triangulations
    are often easy to work with \cite{jaco03-0-efficiency},
    and offer little insight
    into an algorithm's worst-case (or even average-case) behaviour.

    It is only recently that large, comprehensive bodies of census data
    have been developed to study \emph{all} triangulations of a given
    input size \cite{burton10-complexity,burton11-genus}.
    By using such censuses for our experimental data, we ensure that we
    identify pathological cases, and also gain a clear understanding of
    how common or rare they are.

    \item
    \emph{The intense computation required to
    study normal surfaces with such large bodies of data.}

    Normal surface enumeration algorithms have enjoyed significant
    advances in recent years, and modern algorithms now run many orders
    of magnitude faster than their earlier counterparts
    \cite{burton12-regina,burton10-tree}.
    The experimental work in this paper required several years of
    combined CPU time, and without recent algorithmic advances
    \cite{burton09-convert,burton11-genus,burton12-regina} this work would not have been possible.
\end{enumerate}

The preliminary study in \cite{burton10-complexity} examines only the
combinatorial complexity $\sigma(\tri)$, and works with a data set of
roughly 150~million triangulations of closed 3-manifolds.
The study in this paper is significantly richer, both in scope and detail:
\begin{itemize}
    \item We examine the algebraic complexity $\kappa(\tri)$ in addition to
    the combinatorial complexity $\sigma(\tri)$;

    \item We work with a comprehensive data set of almost
    \emph{three~billion} triangulations, spanning both \emph{closed}
    manifolds (which are important for algorithms such as prime decomposition)
    and \emph{bounded} manifolds (which are important for knot algorithms);

    \item We also examine these quantities in ``optimised'' settings
    that arise in practical computation---in particular,
    \emph{one vertex triangulations} (a common optimisation used in many
    topological algorithms), and the restricted problem of
    enumerating only \emph{vertex normal discs} (which is important for
    unknot recognition, or testing surfaces for incompressibility).
\end{itemize}

Our pathological families yield the first known explicit exponential
lower bounds on worst-case complexity for the computationally important
settings of
bounded triangulations, closed 1-vertex triangulations, and normal discs.
In many settings our pathological families match the
experimental worst-case growth rates precisely, and we conjecture that the
resulting exponential bounds are in fact exact.

Of related note is a result of Hass et~al.\ \cite{hass03-spanningdisks},
who establish an exponential lower
bound on the worst-case complexity of a triangulated disc spanned by the
unknot in $\R^3$ (this has particular relevance for the complexity of unknot
recognition).  Their result operates under stricter geometric
constraints, and it is not yet known how it translates to the
more flexible setting of normal surfaces.

We emphasise again that our experimental data sets use exhaustive
censuses of \emph{all} possible input triangulations below a given size.
This paper introduces the first such census of bounded
3-manifold triangulations in the literature,
totalling over 20~billion triangulations of size $n \leq 9$.

We use exhaustive censuses because there is no known efficient algorithm
for randomly sampling large triangulations
\cite{dunfield06-random-covers,reiner12-mfo-problems}, and although
there are other methods for generating random 3-manifolds
\cite{dunfield06-random-covers,maher10-random-heegaard},
nothing is known about the bias of the resulting sample of triangulations.
As a result, although our \emph{census} is large, the
triangulations it contains are all relatively small.
Nevertheless, there are strong reasons to believe that our experimental results
are indicative of behaviour for larger inputs;
we discuss this further in Section~\ref{sec:conc}.

\section{Preliminaries}
\label{sec:prelim}

By a {\em triangulation} $\tri$, we mean a collection of $n$ abstract
tetrahedra $\Delta_i = i(0123) $, $1 \leq i \leq n$, some or all of whose faces are 
affinely identified or ``glued together'' in pairs;
here $(0123)$ refers to the four vertices of tetrahedron $\Delta_i$.
As a consequence of these face 
gluings, many tetrahedron edges may become identified together; we refer to the result as
a single {\em edge of the triangulation}, and likewise with vertices.
The gluings must be in a way that no edge is identified with itself in 
reverse as a result. Moreover, each tetrahedron face must be
identified with one and only one partner (we call these
\emph{internal faces}), or with nothing at all (we call these
\emph{boundary faces}). The set of boundary faces is called the {\em boundary of the triangulation} and
 denoted by $\partial \tri$. If $\partial \tri = \emptyset$ then $\tri$ is called a 
{\em closed triangulation}, otherwise it is said to be {\em bounded}.
Not all triangulations (closed or bounded) represent $3$-manifolds.
However, unless otherwise specified, this is true for all triangulations 
presented in this paper.

Throughout this article, the gluings of the triangles are given by a 
bijection of their vertices $i(abc) \mapsto j(de\!f)$ where the 
symbol $i(abc)$ denotes triangle $(abc)$ from tetrahedron $i$ 
and the order of the vertices determines the gluing.
A triangulation as defined above is sometimes referred to as {\em generalised
triangulation};
these are more general and flexible than simplicial complexes.
An important case is a \emph{1-vertex triangulation}, in which all
tetrahedron vertices become identified together.

The \emph{face pairing graph} of a triangulation $\tri$
is the multigraph whose nodes represent tetrahedra, and whose arcs
represent pairs of tetrahedron faces that are glued together.
A face pairing graph may contain loops (if two faces of the same
tetrahedron are glued together), and/or multiple edges (if two tetrahedra
are joined together along more than one face). See Figure 
\ref{fig:face_pairing_graphs} for examples.

\begin{figure}[tb]
    \centering
    \includegraphics[width=.65\textwidth]{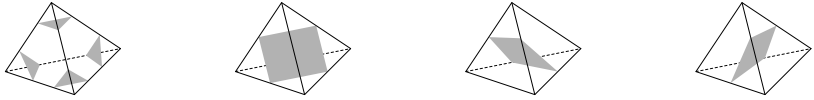}
    \caption{Normal triangles and quadrilaterals within a tetrahedron.}
	\label{fig:normalSubsets}
\end{figure}

A \emph{properly embedded} surface in $\tri$ is a surface
$\mathbf{s} \subseteq \tri$ with no self-intersections, and whose
boundary lies entirely within $\partial \tri$.
A {\em normal surface} in $\tri$ is a properly embedded
surface that meets each tetrahedron $\Delta$ of $\tri$ in a disjoint
collection of triangles and quadrilaterals, each running between
distinct edges of $\Delta$, as illustrated in Figure
\ref{fig:normalSubsets}. There are four \emph{triangle types} and three
\emph{quadrilateral types} in $\Delta$
according to which edges they meet. Within each
tetrahedron there may be several triangles or quadrilaterals of any
given type; collectively these are referred to as {\em normal pieces}.
The intersection of a normal piece of a tetrahedron with one of its
faces is called {\em normal arc}; each face has three \emph{arc types}
according to which two edges of the face an arc meets.

Counting the number of pieces of each type for
a normal surface $\mathbf{s}$ gives rise to a $7$-tuple per 
tetrahedron of $\tri$ and hence a $7n$-tuple of non-negative integers describing
$\mathbf{s}$ as a point in $\mathbb{R}_{\geq 0}^{7n}$, called its {\em normal coordinates}. 
Such a point must satisfy a set of linear homogeneous
{\em matching equations} (one for each arc type of each internal face).
These equations are necessary but not sufficient: the normal
coordinates must also satisfy a set of combinatorial constraints called
the \emph{quadrilateral constraints}, which we discuss further in the
appendix.

The solution set to the matching equations in $\mathbb{R}_{\geq 0}^{7n}$
is a polyhedral cone (the cross-section polytope
of this cone is also known as the {\em projective solution space}).
A {\em vertex normal surface} is one whose normal
coordinates lie on an extremal ray of this polyhedral cone and,
in addition, its normal coordinates are minimal for all integer points
on this ray. Thus, there are only finitely many such vertex normal surfaces;
every normal surface can then be expressed as a positive rational linear
combination of these surfaces just like every point in a polyhedral cone is
a positive rational linear combination of points on its extremal rays.
This is why,
when enumerating normal surfaces in a triangulation, we typically just consider
the finite set of vertex normal surfaces.

\section{Theoretical lower bounds}
\label{sec:lower}

Here we establish lower bounds for the worst-case
values of $\sigma(\tri)$ and $\kappa(\tri)$, i.e.,
the maximum possible $\sigma(\tri)$ or $\kappa(\tri)$ for an
$n$-tetrahedron triangulation $\tri$.
Recall that $\sigma(\tri)$ measures the combinatorial complexity, i.e., the number of vertex normal surfaces within $\tri$,
and that $\kappa(\tri)$ measures the algebraic complexity, i.e., the maximum coordinate of any vertex normal surface of $\tri$.

Few such results are known:
there are no explicit lower bounds
on the worst-case $\kappa(\tri)$ in the
literature, and the only explicit lower bound on the worst-case
$\sigma(\tri)$ is given by a family of closed triangulations
with $\sigma(\tri) \in \Theta(17^{n/4}) \simeq \Theta(2.03^n)$
\cite{burton10-complexity}.
In this section, we give new exponential
lower bounds for $\sigma (\mathcal{T})$ and $\kappa (\mathcal{T})$
in a variety of settings that hold particular relevance
for key algorithms in computational geometry and topology.
We sketch the main constructions and results here; see the
appendix for detailed proofs.

\subsection{Closed triangulations with many normal surfaces}
\label{sec:closedTrigs}

Important $3$-manifold algorithms such 
as prime decomposition often begin by converting the input triangulation
to a 1-vertex triangulation, whereupon the subsequent processing becomes
significantly easier \cite{jaco03-0-efficiency}.
The $\Theta(2.03^n)$ family of \cite{burton10-complexity} is not of this
type (each triangulation has $n+1$ vertices), which raises the question
of how such bounds behave in a 1-vertex setting:

\begin{theorem}
    \label{thm:pow2}
    There is a family $\mathcal A_n$, $n \ge 1$, of closed 1-vertex
    triangulations with $n$ tetrahedra and
    $\sigma(\mathcal A_n) = 2^n$ vertex normal surfaces.
\end{theorem}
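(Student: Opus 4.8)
We need to construct closed 1-vertex triangulations $\mathcal{A}_n$ with $n$ tetrahedra whose count of vertex normal surfaces is exactly $2^n$. Let me think about what kind of construction would give exactly $2^n$ vertex normal surfaces.

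The number $2^n$ strongly suggests a "product-like" or "chain-like" structure where each tetrahedron contributes a binary choice independently. In normal surface theory, the quadrilateral constraints say that in each tetrahedron at most one of the three quadrilateral types can be nonzero. So within each tetrahedron there are essentially a few local states. If we can build a triangulation where:
- the matching equations decouple tetrahedron-by-tetrahedron up to a cyclic/linear coupling, and
- in each tetrahedron a vertex solution must pick one of exactly two "modes" (say, quad type A versus quad type B, or quad versus no quad), with the choices in different tetrahedra being independent,
then the vertex normal surfaces would be in bijection with $\{0,1\}^n$, giving exactly $2^n$.

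So my plan is: take a "layered chain" of $n$ tetrahedra glued in a cycle (to keep everything closed and 1-vertex — a single vertex is typically achieved by taking a suitable cyclic chain, e.g. the standard layered lens-space or layered solid-torus-style constructions, closed up appropriately), designed so that the face-pairing graph is a cycle with each node having a loop or a doubled edge. Then I would write down the matching equations explicitly. Because each internal face contributes arc-type equations linking the two incident tetrahedra, in a cyclic chain these equations will look like a "transfer" system: the normal data entering tetrahedron $i$ determines, together with a binary choice, the data leaving it. I would then show that the projective solution space is combinatorially a polytope with exactly $2^n$ vertices — concretely, that every extremal ray corresponds to a consistent assignment of one of two local quad-types to each tetrahedron, and conversely every such assignment yields a genuine vertex solution (checking that the resulting vector is integral, primitive, satisfies the quad constraints, and lies on an extremal ray because it is the unique solution with that support).

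The main obstacle I expect is twofold. First, designing the gluings so that the triangulation is simultaneously closed, a genuine 3-manifold, one-vertexed, and has the clean "binary per tetrahedron" matching-equation structure — these constraints pull against each other, and getting all of them simultaneously is the delicate part (the authors likely use a specific known family, e.g. twisted layered loops or a variant of Burton's "$\mathcal{B}_n$"-type examples). Second, proving the count is \emph{exactly} $2^n$ rather than just $\geq 2^n$ or $\Theta(2^n)$: one must rule out extra vertex normal surfaces coming from triangle-only solutions (vertex-linking surfaces and their relatives) or from "mixed" solutions that are not of the pure binary form. I would handle the lower bound ($\geq 2^n$) by exhibiting the $2^n$ explicit surfaces and checking each is a vertex solution via a support/rank argument, and the upper bound ($\leq 2^n$) by analysing the matching equations to show any vertex solution's support forces it into one of these $2^n$ combinatorial types — most likely by an inductive "peeling" argument along the chain, using the quadrilateral constraints at each step to kill all but two branches. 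A short verification that the one-vertex property holds (tracking the vertex identifications through the face gluings) and that each $\mathcal{A}_n$ is a 3-manifold (checking edge links are circles) rounds out the argument.
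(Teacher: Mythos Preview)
Your high-level intuition is right: the paper's $\mathcal{A}_n$ is indeed a cyclic chain whose face pairing graph is an $n$-cycle with a loop at each node, and the $2^n$ count does come from an independent binary choice in each tetrahedron. However, your proposal is a plan rather than a proof, and the specific mechanism you guess at is not the one that works.

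The concrete construction you are missing is the self-fold $i(012)\mapsto i(013)$ on every tetrahedron, followed by the cyclic gluing $i(123)\mapsto (i{+}1)(230)$. The self-fold is the key idea: in a single folded tetrahedron the four vertex normal surfaces $\mathbf{a},\mathbf{b},\mathbf{c},\mathbf{d}$ are all mutually \emph{compatible}, so the quadrilateral constraints become vacuous and the solution cone in each block is genuinely linear. The binary choice is then \emph{not} ``quad type A versus quad type B'' as you suggest, but rather between two specific combinations $\mathbf{u}_i=\mathbf{b}_i+\mathbf{c}_i+\mathbf{d}_i$ (a disc) and $\mathbf{v}_i=\mathbf{a}_i+\mathbf{d}_i$ (a punctured torus) that happen to have \emph{identical} boundary patterns $\alpha_i+\beta_i+\gamma_i$ on the two remaining faces. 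Because the boundary patterns match, any mixture $p_i\mathbf{u}_i+q_i\mathbf{v}_i$ with constant $\epsilon=p_i+q_i$ glues up around the cycle; vertex normal surfaces are exactly those with $\epsilon=1$ and $(p_i,q_i)\in\{(1,0),(0,1)\}$, giving $2^n$ on the nose (and $\binom{n}{k}$ of genus $k$).

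Your proposed ``support/rank'' and ``inductive peeling'' arguments would likely founder without this structure: if the local pieces were not all compatible, the quad constraints would carve the cone into several linear pieces and counting vertices would be much messier; and if the two local modes did not share a boundary pattern, the choices would not be independent across tetrahedra. Also, the vertex-linking sphere you worry about ruling out is not extra---it is the all-$\mathbf{u}_i$ choice and is already one of the $2^n$.
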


We call these triangulations \emph{binomial triangulations}, because
more precisely they have $\binom{n}{k}$ vertex normal surfaces
of genus $k$ for each $k=0,\ldots,n$ (whereby $\sigma(\mathcal A_n) =
\sum \binom{n}{k} = 2^n$).  We construct each $\mathcal A_n$ from $n$
tetrahedra $\Delta_1,\ldots,\Delta_n$ in the following manner.

We begin by folding together two faces of the same tetrahedron $\Delta_i
= i(0123)$ by the gluing $i(012) \mapsto i(013)$ for $i=1,...,n$. Then we identify
tetrahedra $\Delta_i$ and $\Delta_{i+1}$ by $i(123) \mapsto (i+1)(230)$
(where $\Delta_{n+1} = \Delta_{1}$). These gluings identify all the
vertices to a single vertex, therefore $\mathcal A_n$ is a 1-vertex triangulation. See Figure \ref{fig:face_pairing_graphs} for a picture of the face pairing graph of $\mathcal A_n$ for $n=6$.
It can be shown that each $\mathcal A_n$ is a closed 1-vertex
triangulation of the 3-sphere.


To see why $\sigma(\mathcal A_n) = 2^n$, we observe that
in each tetrahedron there are two normal ``subsurfaces'' which are compatible with any other normal 
surface of the triangulation. One of these adds genus to the overall surface 
and the other does not. We show that the vertex normal
surfaces are precisely combinations of these subsurfaces, whereby
the binomial coefficients and $2^n$ growth rate easily follow.

\begin{remark}
Experimentation suggests that this family $\mathcal A_n$ might in fact
yield a tight upper bound
for closed 1-vertex triangulations; see Section~\ref{sec:expt} for details.
\end{remark}

\subsection{Bounded triangulations with many normal surfaces}
\label{sec:bdTrigs}


The number of vertex normal surfaces in a bounded triangulation
has a direct impact on algorithms such as unknot recognition and
incompressibility testing \cite{jaco95-algorithms-decomposition}.
Here we give the first explicit exponential lower bound on the
worst-case growth rate of this quantity.
The proof is based on a general construction principle
(Lemma~\ref{lem:treeLemma}) which, for an arbitrary bounded triangulation
$\mathcal{G}_0$ satisfying certain weak constraints, uses a
recursive squaring argument to obtain a family of triangulations
$\{\mathcal{G}_k\}$ with $\Omega(\beta^n)$ vertex normal surfaces,
where the exponential base $\beta$ is derived from $\mathcal{G}_0$.
By choosing a suitable
starting triangulation $\mathcal{G}_0$, we obtain the explicit
base $\beta \simeq 2.3715$ (Corollary~\ref{cor:bdrygrowth}).

For both unknot recognition and incompressibility testing, we can
improve the underlying algorithms by only considering vertex normal
\emph{discs} (vertex normal surfaces that are topologically trivial).
In Theorem~\ref{thm:pathFamily} we show that this restricted
quantity is also worst-case exponential: we build a family of
triangulations with $\Theta(2^n)$ vertex normal discs.

\begin{lemma}
	\label{lem:treeLemma}
	Suppose $\mathcal{G}_0$ is a bounded triangulation with $n_0$ tetrahedra,
    $f_0$ is a boundary face of $\mathcal{G}_0$ such that not all
	vertices of $f_0$ are identified in $\mathcal{G}_0$, and
    $c_0$ is one of the three normal arc types on $f_0$.
    If there are $\alpha_0$ vertex normal surfaces in $\mathcal{G}_0$
    that meet $f_0$ in at least one arc of type $c_0$ but in no other normal arc
    types, then $\mathcal{G}_0$ can be extended to a family of
    triangulations $\{\mathcal{G}_k\}$ in which the number of
    vertex normal surfaces grows at a rate of $\Omega(\beta^n)$,
    where $\beta = \alpha_0^{1/(n_0+1)}$.
\end{lemma}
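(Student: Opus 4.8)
The plan is to realise the ``recursive squaring'' by a gadget that glues two copies of the current triangulation onto a single new tetrahedron, iterated along a balanced binary tree. Given a bounded triangulation $\mathcal{G}$ with a distinguished boundary face $f$ whose three corners are not all identified, together with a chosen arc type $c$ on $f$, I would form $\mathcal{G}' = \mathcal{G}\ast\mathcal{G}$ by taking two disjoint copies $\mathcal{G}^{(1)},\mathcal{G}^{(2)}$, introducing one fresh tetrahedron $\Delta=(0123)$, and gluing $f^{(1)}$ to $\Delta(012)$ and $f^{(2)}$ to $\Delta(013)$ via bijections that (i) carry $c$ to the arc type cutting off the shared vertex $0$ and (ii) leave two of the three corners of $\Delta(023)$ unidentified in $\mathcal{G}'$ --- which is arrangeable exactly because not all corners of $f$ were identified. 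The leftover face $\Delta(023)$ becomes the new distinguished face $f'$, with new distinguished arc type $c'$ the one cutting off vertex $0$, and $\Delta(123)$ stays on the boundary. Setting $\mathcal{G}_0$ as given and $\mathcal{G}_k:=\mathcal{G}_{k-1}\ast\mathcal{G}_{k-1}$, a copy of $\mathcal{G}_0$ sits at each of the $2^k$ leaves of a depth-$k$ binary tree and one connector tetrahedron at each of the $2^k-1$ internal nodes, so $n_k = 2^k n_0 + (2^k-1) = 2^k(n_0+1)-1$; note $n_k+1 = 2^k(n_0+1)$, and that the ``not all corners identified'' hypothesis is reproduced on $(f',c')$, so the recursion is well defined.

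Next I would define the combination of surfaces. If $S_1$ is a good vertex normal surface of $\mathcal{G}^{(1)}$ --- meeting $f^{(1)}$ in arcs of type $c$ only, and in at least one --- and $S_2$ likewise in $\mathcal{G}^{(2)}$, then $S_1\ast S_2$ is the normal surface of $\mathcal{G}'$ whose coordinates agree with $S_1$ on $\mathcal{G}^{(1)}$ and with $S_2$ on $\mathcal{G}^{(2)}$, together with a stack of normal triangles cutting off vertex $0$ inside $\Delta$ that matches the type-$c$ arcs arriving from both copies (rescaling the copies if the two arc counts differ). The ``good'' hypothesis is precisely what makes the matching equations across the two new internal faces solvable in this way with nothing else crossing them, and by the choice of $f'$ and $c'$ the surface $S_1\ast S_2$ is again good. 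The assignment $(S_1,S_2)\mapsto S_1\ast S_2$ is injective since restricting to $\mathcal{G}^{(i)}$ recovers $S_i$. Hence, writing $\alpha_k$ for the number of good vertex normal surfaces of $\mathcal{G}_k$, we obtain $\alpha_k\ge\alpha_{k-1}^2$ and therefore $\alpha_k\ge\alpha_0^{2^k}$ --- \emph{provided} $S_1\ast S_2$ is genuinely a vertex normal surface of $\mathcal{G}_k$.

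That verification is the main obstacle, since a combination of vertices of the projective solution space need not itself be one. I would argue directly: write $S:=S_1\ast S_2=\sum_i\lambda_i V_i$ as a non-negative combination of vertex normal surfaces $V_i$ of $\mathcal{G}_k$ (possible since the matching-equation cone is pointed), and show each $V_i$ is a multiple of $S$. Restricting to $\mathcal{G}^{(1)}$, the matching equations of $\mathcal{G}^{(1)}$ that survive in $\mathcal{G}_k$ involve only $\mathcal{G}^{(1)}$-coordinates, so each $V_i|_{\mathcal{G}^{(1)}}$ is a normal surface of $\mathcal{G}^{(1)}$ and $\sum_i\lambda_i(V_i|_{\mathcal{G}^{(1)}})=S_1$; since $S_1$ spans an extreme ray of $\mathcal{G}^{(1)}$'s cone, the defining property of an extreme ray forces $V_i|_{\mathcal{G}^{(1)}}=\mu_i S_1$ with $\mu_i\ge 0$, and symmetrically $V_i|_{\mathcal{G}^{(2)}}=\nu_i S_2$. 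Finally, the matching equations at the two faces glued to $\Delta$, combined with the combinatorial rigidity of $\Delta$ --- only one arc type may appear on $\Delta(012)$ and on $\Delta(013)$, which forces all quadrilateral coordinates and all but one triangle coordinate of $V_i$ inside $\Delta$ to vanish --- pin down the $\Delta$-part of $V_i$ and force $\mu_i=\nu_i$, so $V_i=\mu_i S$. Thus $S$ lies on an extreme ray, and it is the minimal integer point on that ray because $S_1$ and $S_2$ are (so the coordinate gcd of $S$ is $1$); hence $S$ is a vertex normal surface. The real work here is packaging this local-to-global rigidity argument cleanly and doing the $\Delta$-bookkeeping, which is where the precise gluing pattern, the choice of the arc type $c_0$, and the non-identified-vertices hypothesis all get used.

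Combining the counts completes the proof: $\sigma(\mathcal{G}_k)\ge\alpha_k\ge\alpha_0^{2^k}=\alpha_0^{(n_k+1)/(n_0+1)}=\beta^{\,n_k+1}\ge\beta^{\,n_k}$ with $\beta=\alpha_0^{1/(n_0+1)}$ (the case $\alpha_0=1$, where $\beta=1$, being vacuous), so along the values $n=n_k$ the family $\{\mathcal{G}_k\}$ realises $\Omega(\beta^n)$ vertex normal surfaces; if a bound valid for every $n$ is wanted one can interpolate by padding with trivial boundary tetrahedra.
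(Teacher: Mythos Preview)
Your proof is correct and follows essentially the same route as the paper: recursively glue two copies of $\mathcal{G}_{k-1}$ to a fresh tetrahedron along the distinguished boundary faces so that the distinguished arc types match up through a single triangle type of $\Delta$, obtain $\alpha_k\ge\alpha_{k-1}^2$ and $n_k+1=2(n_{k-1}+1)$, and verify extremality of $S_1\ast S_2$ by restricting any decomposition to each copy and using the rigidity inside $\Delta$. The paper's argument is the same in structure; your extremality paragraph is in fact slightly more careful than the paper's sketch in handling the $\Delta$-coordinates.

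The one point you underplay is the verification that each $\mathcal{G}_k$ is a genuine bounded $3$-manifold triangulation. You invoke the ``not all corners of $f$ identified'' hypothesis only to propagate that same hypothesis to $(f',c')$, but the paper uses it primarily to check that all vertex links in $\mathcal{G}_k$ remain discs after the two gluings to $\Delta$ (it does a short case analysis according to which pair of corners of $f_{k-1}$, if any, are identified, and chooses the attaching maps accordingly). Since the paper's standing convention is that ``triangulation'' means a $3$-manifold triangulation, this link analysis is part of what must be shown; your sentence ``which is arrangeable exactly because not all corners of $f$ were identified'' should be expanded to cover it.
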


In the proof (for a detailed proof see the appendix), we recursively construct
$\mathcal{G}_k$ by joining two copies of $\mathcal{G}_{k-1}$
to an additional tetrahedron $\Delta_k$ along their faces $f_{k-1}$
(see Figure \ref{fig:attachingMapSimple}).
Suppose there are $\alpha_{k-1}$ vertex normal surfaces in
$\mathcal{G}_{k-1}$ that meet $f_{k-1}$ in only arcs of type
$c_{k-1}$.  For each pair of such surfaces in the two copies of
$\mathcal{G}_{k-1}$,
we can combine these surfaces in a way that extends through $\Delta_k$
to meet one of the free faces $f_k$ of $\Delta_k$ in just one chosen normal arc
type $c_k$, and this extension yields a vertex normal surface of
$\mathcal{G}_k$.
There are $\alpha_{k-1}^2$ such pairings, and
therefore $\alpha_k \geq \alpha_{k-1}^2$ such vertex normal surfaces of
$\mathcal{G}_k$.
This recurrence yields the final growth rate of
$\Omega(\beta^n)$ where $\beta = \alpha_0^{1/(n_0+1)}$.
We need the assumption that not all vertices of the boundary face $f_0$
are identified in $\mathcal{G}_0$ to show that each triangulation
$\mathcal{G}_k$ represents a bounded 3-manifold.
	\begin{figure}[tb]
		\begin{center}
		   \includegraphics[width=.35\textwidth]{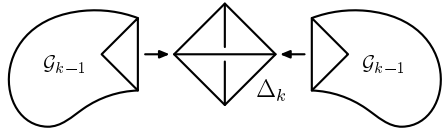}
		   \caption{Attaching two copies of $\mathcal{G}_{k-1}$ to the tetrahedron $\Delta$.\label{fig:attachingMapSimple}}
		\end{center}
	\end{figure}

\begin{corollary}
    \label{cor:bdrygrowth}
	There is a triangulation $\mathcal{G}=\mathcal{G}_0$
    that is the starting point of a family of bounded triangulations 
	$\{\mathcal{G}_k\}$, $k \geq 0$, with $n_k = 12 \cdot 2^k - 1$ tetrahedra and $\sigma (\mathcal{G}_k) \geq 2.3715^{n_k}$.
\end{corollary}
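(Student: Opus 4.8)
The plan is to deduce the corollary from Lemma~\ref{lem:treeLemma} by supplying a concrete starting triangulation. The recursion in that lemma satisfies $n_k = 2n_{k-1}+1$, equivalently $n_k+1 = 2^k(n_0+1)$, so the target count $n_k = 12\cdot 2^k - 1$ forces $n_0 = 11$, and the exponential base it produces is $\beta = \alpha_0^{1/(n_0+1)} = \alpha_0^{1/12}$. Hence it suffices to exhibit an $11$-tetrahedron bounded triangulation $\mathcal{G}_0$, a boundary face $f_0$ not all of whose vertices are identified in $\mathcal{G}_0$, and a normal arc type $c_0$ on $f_0$, such that the number $\alpha_0$ of vertex normal surfaces of $\mathcal{G}_0$ meeting $f_0$ in at least one arc of type $c_0$ and in no other normal arc type satisfies $\alpha_0 \geq 2.3715^{12} \approx 3.16\times 10^4$. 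Everything else is mechanical: feeding $(\mathcal{G}_0,f_0,c_0)$ into Lemma~\ref{lem:treeLemma} yields the family $\{\mathcal{G}_k\}$ with $n_k = 12\cdot 2^k - 1$ tetrahedra, and iterating the recursion $\alpha_k \geq \alpha_{k-1}^2$ from its proof gives $\alpha_k \geq \alpha_0^{2^k} = \beta^{12\cdot 2^k} = \beta^{n_k+1}$; since these $\alpha_k$ surfaces are among all vertex normal surfaces of $\mathcal{G}_k$, we obtain $\sigma(\mathcal{G}_k) \geq \alpha_k \geq \beta^{n_k+1} \geq 2.3715^{n_k}$, as claimed.

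So the real work is to pin down $\mathcal{G}_0$. I would obtain it either by a targeted computer search maximising $\alpha_0^{1/(n_0+1)}$ over bounded triangulations slightly beyond the $n \le 9$ census range, or by an explicit construction — for instance by joining several copies of a small building block, chosen in the spirit of the binomial triangulations of Theorem~\ref{thm:pow2} and with one boundary face left exposed, so that the normal surfaces threading through the chain and out of that face in a fixed arc type multiply up. In either case $\mathcal{G}_0$ is then a fixed finite object, and its hypotheses for Lemma~\ref{lem:treeLemma} reduce to finite checks: that $\mathcal{G}_0$ is a triangulated bounded $3$-manifold, that the chosen $f_0 \in \partial\mathcal{G}_0$ has at least two distinct vertices, and that $c_0$ is one of its three arc types.

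Next I would certify the count $\alpha_0$. Because $n_0 = 11$ is small, a full vertex normal surface enumeration of $\mathcal{G}_0$ is a routine computation; from its output one keeps exactly the surfaces whose intersection with $f_0$ consists solely of $c_0$-arcs (with at least one present), counts them, and verifies $\alpha_0 \geq 2.3715^{12}$, so that $\beta = \alpha_0^{1/12} \geq 2.3715$. Converting the $\Omega(\beta^n)$ of Lemma~\ref{lem:treeLemma} into the stated explicit bound along the subsequence $n = n_k$ is then the short arithmetic given above, with no hidden constant since the chain $\sigma(\mathcal{G}_k) \geq \alpha_k \geq \beta^{n_k+1}$ is a genuine inequality.

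The main obstacle is the construction of $\mathcal{G}_0$ itself: one needs a single small triangulation that is simultaneously rich in vertex normal surfaces (more than $3\times 10^4$ of them on only $11$ tetrahedra is already substantial) \emph{and} has that richness concentrated on one arc type of one boundary face whose vertices are not all identified. This last bookkeeping — the ``no other arc types'' restriction together with the ``two distinct vertices'' requirement needed to keep every $\mathcal{G}_k$ a manifold — is exactly what prevents one from simply reusing an arbitrary high-$\sigma(\tri)$ example, and tuning the building block and chain length so that the numerics clear the $2.3715$ threshold is where the care is required.
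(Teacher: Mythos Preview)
Your reduction is exactly the paper's: invoke Lemma~\ref{lem:treeLemma} with $n_0=11$, so that $n_k+1=12\cdot 2^k$ and $\beta=\alpha_0^{1/12}$, and then exhibit a single $11$-tetrahedron bounded triangulation with $\alpha_0\geq 2.3715^{12}$. The arithmetic chain $\sigma(\mathcal{G}_k)\geq\alpha_k\geq\alpha_0^{2^k}=\beta^{n_k+1}\geq 2.3715^{n_k}$ is also correct.

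The gap is that you have not actually produced $\mathcal{G}_0$; you describe two strategies (targeted search, or chaining binomial-style blocks) but carry out neither, and you yourself flag this as ``the main obstacle.'' The paper resolves it by the first of your two suggestions: it writes down an explicit $11$-tetrahedron triangulation $\mathcal{G}$ via a table of face gluings (its face pairing graph appears in Figure~\ref{fig:face_pairing_graphs}), takes $f_0=0(012)$ with $c_0$ the arc around vertex~$0$, and then uses \emph{Regina} to enumerate the vertex normal surfaces, obtaining $\sigma(\mathcal{G})=61\,526$ and $\alpha_0=31\,643$, whence $\beta=31\,643^{1/12}\simeq 2.3715$. There is no clever hand construction behind $\mathcal{G}$; it is a computer-found certificate, and the verification of the hypotheses of Lemma~\ref{lem:treeLemma} (bounded $3$-manifold, $f_0$ with not all vertices identified, the count $\alpha_0$) is a finite machine check on that fixed object. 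So your outline is complete once you replace ``I would obtain it'' with the explicit data.
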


We prove this using Lemma~\ref{lem:treeLemma} by choosing a starting
triangulation $\mathcal{G} = \mathcal{G}_0$ with
$n_0=11$ tetrahedra and a choice of face $f_0$ and arc type $c_0$ with
$\alpha_0 = 31\,643$ corresponding vertex normal surfaces.
This yields a growth rate of $\Omega(\beta^n)$ with
$\beta = 31\,643^{1/12} \simeq 2.3715$.
The face pairing graph of $\mathcal{G}$ is shown in Figure
\ref{fig:face_pairing_graphs}; for a detailed construction
see the appendix.

\begin{figure}[tb]
	\centering
	\includegraphics[width=.9\textwidth]{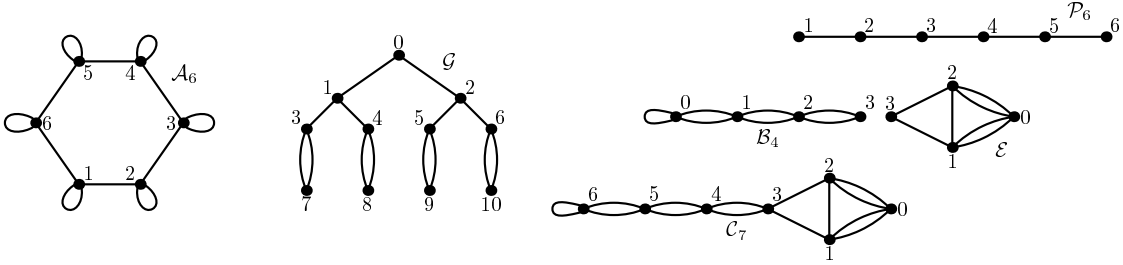}
	\caption{Face pairing graphs of the binomial family $\mathcal{A}_6$,
    triangulation $\mathcal{G}$,
		the path family $\mathcal{P}_6$, the bounded family $\mathcal{B}_4$,
		triangulation $\mathcal{E}$, and the closed family $\mathcal{C}_7$.}
	\label{fig:face_pairing_graphs}
\end{figure}



\smallskip
For our final result, we construct the
\emph{path triangulation} $\mathcal{P}_n$ from $n$ tetrahedra
$\Delta_i = i(0123) $, $i=1,...,n$, by joining tetrahedra
$\Delta_i$ and $\Delta_{i+1}$ by the map $i(012) \mapsto (i+1)(013)$.
It can be shown that each $\mathcal P_{n}$ is a bounded triangulation
whose underlying 3-manifold is the 3-ball (for details see the appendix).

\begin{theorem}
	\label{thm:pathFamily}
For each $n \ge 1$, $\mathcal P_{n}$ has $2^{n+1}+\frac{(n+1)(n+2)}{2}
\in \Theta(2^n)$ vertex normal discs.
\end{theorem}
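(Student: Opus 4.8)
The plan is to work directly in normal coordinates and exploit the extremely rigid ``chain'' structure of $\mathcal P_n$. Give $\Delta_i$ the four triangle coordinates $t_i^0,\dots,t_i^3$ together with three quadrilateral coordinates, and record the three matching equations coming from each internal face $i(012)=(i+1)(013)$. Since the triangulation is a linear chain these equations couple only consecutive tetrahedra, and after eliminating variables they acquire a transfer-matrix shape: one block of coordinates is forced to be constant along the whole chain, while another is propagated one tetrahedron at a time. Combined with the quadrilateral constraints this pins down the solution set almost completely.

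The next step is to translate ``$\mathbf s$ is a disc'' into a condition on coordinates. Using the standard linear formula for $\chi(\mathbf s)$ in terms of edge weights, triangle/quadrilateral totals and boundary-arc counts (all linear in the normal coordinates), the requirement $\chi(\mathbf s)=1$ becomes a single linear equation; a connected normal surface in the $3$-ball $\mathcal P_n$ satisfying it is automatically a disc (an $\mathbb{RP}^2$ cannot embed in a ball), so the classification reduces to checking connectedness on a short list of candidate vectors.

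The heart of the argument is to enumerate the vertex normal surfaces, i.e.\ the minimal integer points on the extremal rays of the matching-equation cone that obey the quadrilateral constraints, and to test each for being a disc. I would organise this by the pattern recording which quadrilateral type (if any) is used in each tetrahedron. In the all-triangle pattern the solution cone is the simplicial cone $\mathbb{R}^{n+3}_{\ge 0}$, and I would show its $n+3$ rays are exactly the $n+3$ vertex links of $\mathcal P_n$; since every vertex lies on $\partial\mathcal P_n$, each such link is a disc, contributing $n+3$ discs. For patterns that use at least one quadrilateral, the matching equations sharply restrict which quad types may occur in adjacent tetrahedra and then determine the surviving triangle coordinates; carrying this finite case analysis out along the chain --- essentially a recursion that roughly doubles the number of surfaces when $n$ increases by one --- produces the remaining extremal rays, and evaluating $\chi$ on them leaves exactly $2^{n+1}+\tfrac{(n+1)(n+2)}{2}-(n+3)$ discs, the discarded rays being annuli, M\"obius bands, spheres, or higher-genus surfaces. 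Adding the two contributions gives $2^{n+1}+\tfrac{(n+1)(n+2)}{2}$ vertex normal discs, and the $\Theta(2^n)$ bound is then immediate.

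The main obstacle is the completeness and correctness of this last enumeration. One has to show that every admissible quadrilateral pattern contributes only a short, fully described set of extremal rays, so that no vertex normal surface is missed; that each listed ray is genuinely extremal with the claimed minimal integer representative --- in particular that a listed disc cannot be obtained by splitting off a closed normal surface inside the ball, which I expect to settle by a direct irreducibility check on the coordinate vectors; and that the $\chi=1$ test is applied accurately against the boundary-arc bookkeeping. That bookkeeping over the two boundary faces of each interior tetrahedron and the exceptional boundary faces at the two ends of the chain is where the quadratic term $\tfrac{(n+1)(n+2)}{2}$ is produced, and it is the most delicate and error-prone part of the proof.
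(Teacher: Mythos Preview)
Your proposal is a strategy outline rather than a proof, and the strategy differs substantially from the paper's. The paper does not classify by quadrilateral pattern or invoke an Euler-characteristic filter. Instead it counts vertex normal surfaces recursively according to the single normal arc type in which they meet the internal face $f_i = i(012)$: writing $\alpha_i,\beta_i,\gamma_i$ for the counts by arc type and $\delta_i$ for surfaces missing $f_i$ entirely, one gets the recurrences $\alpha_{i+1}=\alpha_i+\beta_i$, $\beta_{i+1}=\alpha_i+\beta_i$, $\gamma_{i+1}=\gamma_i+1$, $\delta_{i+1}=\gamma_i+\delta_i$, which solve to $\alpha_n=\beta_n=2^n$, $\gamma_n=n+1$, $\delta_n=n(n+1)/2$. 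The structural fact driving this is that every vertex normal surface meets $f_i$ in at most one arc (otherwise it splits as a sum), hence meets each tetrahedron in at most one normal piece. That same fact immediately shows every vertex normal surface is a disc: each added piece is glued along a single arc, so $\chi$ is preserved from the base case.

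This exposes a genuine error in your plan: there are \emph{no} annuli, M\"obius bands, spheres, or higher-genus vertex normal surfaces in $\mathcal P_n$ to discard. Every vertex normal surface is already a disc, so the disc count equals $\sigma(\mathcal P_n)$ exactly, and your anticipated filtering step is vacuous. If you proceed expecting to throw surfaces away, your enumeration will not close up to the stated formula. Relatedly, the quadratic term $\tfrac{(n+1)(n+2)}{2}$ does not come from boundary-arc bookkeeping at the ends of the chain as you suggest; it is $\gamma_n+\delta_n$, tracking the surfaces whose propagation along the chain is linear rather than doubling. Your $n+3$ vertex links are correctly identified as discs, but they are scattered across the $\gamma$ and $\delta$ families rather than forming a separate ``all-triangle'' block in the final count.
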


We prove this by obtaining explicit recurrences for the number of
vertex normal surfaces with different normal arcs based on the
matching equations. We
have essentially two choices for each normal arc, and either a triangle or
quadrilateral can be added to the previous triangulation giving the
$\Theta(2^{n})$ growth rate.  Again, see the appendix for details.

\subsection{Lower bounds for the size of normal coordinates}
\label{sec:largeCoords}

Here we give exponential lower bounds on the
worst-case algebraic complexity $\kappa ( \mathcal{T} )$.
Our bounds follow a Fibonacci growth rate of
$\Omega([(1+\sqrt{5})/2]^n) \simeq \Omega(1.618)^n$. 
Understanding the worst-case $\kappa(\tri)$ is important for improving the
time and space complexity of normal surface enumeration algorithms
due to a better handling of the integer arithmetic involved
(see Section~\ref{sec:intro}). 

To obtain such lower bounds, we first construct a family of bounded
triangulations, each containing a vertex normal disc with coordinates
growing exponentially in the number of tetrahedra.
We then close the
bounded family using a constant number of additional tetrahedra so
that the vertex normal surface with exponential coordinates is
preserved.
In this way, we are able to construct two families of triangulations, bounded and closed, with Fibonacci type growth rates 
for $\kappa (\tri)$.

The key objects of the construction are so-called
\emph{layered solid tori} \cite{jaco03-0-efficiency}.
These are parameterised triangulations of the solid torus:
the layered solid torus denoted
$\operatorname{LST} (a,b,a+b)$ has as its boundary
a triangulation of the torus with exactly three boundary edges,
such that the {\em meridian disc}
(the unique disc of the solid torus meeting the boundary
in a non-contractible closed curve) intersects the boundary edges
in $a$, $b$ and $a+b$ points.

Layered solid tori are very common tools when constructing triangulations of a given type of $3$-manifold 
(see \cite{kirby78-calculus,lickorish97,orlik72-seifert} for more about constructing $3$-manifolds). 
The most prominent example of a layered solid torus, the one 
tetrahedron triangulation of $\operatorname{LST} (1,2,3)$, is shown in Figure \ref{fig:lst123}.

\begin{figure}[tb]
    \begin{center}
            \includegraphics[width=.8\textwidth]{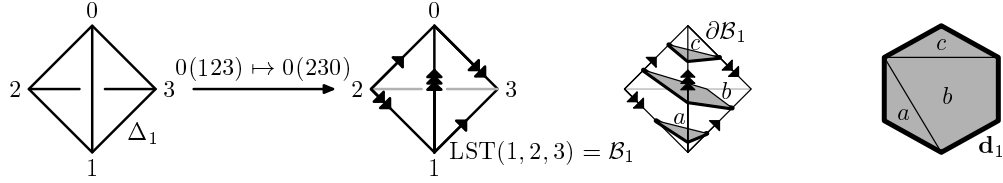}
	    \caption{Left: construction of the $1$-tetrahedron triangulation of $\operatorname{LST} (1,2,3)$. Right: the meridian 
		disc $\mathbf{d}_1$ which is a $6$-gon with $1$ normal quad, $2$ normal 
		triangles and a maximum of $3$ edge intersections.\label{fig:lst123}}
     \end{center}
\end{figure}

\begin{theorem}
	\label{thm:boundedLargeCoords}
	There is a family $\mathcal{B}_{n}$ of bounded $1$-vertex
    triangulations with $n$ tetrahedra,
    where $\mathcal{B}_n$ contains a vertex normal disc $\mathbf{d}_{n}$ with
    maximum coordinate $\operatorname{F}_{n+1}$, 
	where $\operatorname{F}_k$ denotes the $k$-th Fibonacci number.
\end{theorem}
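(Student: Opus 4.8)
The plan is to build the family $\mathcal{B}_n$ inductively by layering tetrahedra onto a solid torus so that the meridian disc grows in a Fibonacci pattern, then arguing that this disc is a vertex normal surface. First I would set up the base case: $\mathcal{B}_1 = \operatorname{LST}(1,2,3)$, whose meridian disc $\mathbf{d}_1$ is the $6$-gon shown in Figure~\ref{fig:lst123}, with one normal quad, two normal triangles, and maximum coordinate $3 = \operatorname{F}_3 = \operatorname{F}_{1+1+1}$ (I would fix the indexing conventions carefully, since the Fibonacci offset is easy to get wrong). The inductive step would be: given $\mathcal{B}_{n-1} = \operatorname{LST}(a,b,a+b)$ with $a = \operatorname{F}_{n-1}$, $b = \operatorname{F}_n$, and meridian disc $\mathbf{d}_{n-1}$ whose maximum coordinate is the largest boundary-edge intersection number $a+b = \operatorname{F}_{n+1}$, layer one further tetrahedron $\Delta_n$ onto the boundary annulus to obtain $\mathcal{B}_n = \operatorname{LST}(b, a+b, a+2b)$. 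The new meridian disc $\mathbf{d}_n$ is obtained from $\mathbf{d}_{n-1}$ by pushing it across $\Delta_n$; its normal coordinates in $\Delta_n$ are determined by how the old boundary arcs get split, and the new maximum boundary intersection number is $a+2b = (a+b)+b = \operatorname{F}_{n+1}+\operatorname{F}_n = \operatorname{F}_{n+2}$, as required. The one-vertex property is automatic for the standard layered solid torus triangulation, and I would note this in passing.

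The core of the argument is tracking the normal coordinates of $\mathbf{d}_n$ through the layering and verifying the maximum is $\operatorname{F}_{n+1}$. The layering operation on $\operatorname{LST}(a,b,a+b)$ replaces the edge with $a+b$ intersections by a new edge with $a+2b$ intersections (this is the standard "layered solid torus" recursion, and I would cite \cite{jaco03-0-efficiency}); the meridian disc in the new tetrahedron $\Delta_n$ acquires one new quadrilateral type whose coordinate equals the smaller of the two old edge-weights, and the triangle coordinates are read off similarly. Concretely I expect the coordinate vector of $\mathbf{d}_n$ restricted to $\Delta_n$ to consist of entries that are consecutive Fibonacci numbers, the largest being $\operatorname{F}_{n+1}$, dominating all earlier coordinates. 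Inductively the maximum coordinate over all of $\mathcal{B}_n$ is $\operatorname{F}_{n+1}$.

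The main obstacle — and the step that needs genuine care rather than bookkeeping — is showing that $\mathbf{d}_n$ is a \emph{vertex} normal surface, i.e.\ that its coordinate vector spans an extremal ray of the normal cone and is the primitive integer point on that ray. Primitivity should follow from the fact that consecutive Fibonacci numbers are coprime, so $\gcd$ of the coordinates is $1$. Extremality is the delicate part: I would argue that $\mathbf{d}_n$ is the unique connected normal surface (up to normal isotopy) meeting the boundary torus in the meridian slope, hence cannot be written as a sum of two distinct normal surfaces — any such decomposition would have to respect the quadrilateral constraints and the matching equations, and I would show the summands would be forced to be multiples of $\mathbf{d}_n$ itself. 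Alternatively, and perhaps more cleanly, one can exhibit an explicit supporting hyperplane: a linear functional (coming from a weighted count of normal arcs on the boundary) that is zero on $\mathbf{d}_n$ and strictly positive on every other normal surface in $\mathcal{B}_n$. Getting this extremality argument airtight is where I would expect to spend the most effort; everything else is a controlled induction on layered solid tori.
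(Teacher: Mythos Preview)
Your overall strategy---layered solid tori with Fibonacci parameters, meridian disc as the witness surface, induction on the layering---is exactly the paper's approach. Two points need correction, one bookkeeping and one substantive.

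First, the bookkeeping: the maximum normal coordinate is \emph{not} the largest boundary-edge intersection number. In the paper's indexing $\mathcal{B}_n = \operatorname{LST}(\operatorname{F}_{n+1},\operatorname{F}_{n+2},\operatorname{F}_{n+3})$, and the layering in the inductive step is over the edge with the \emph{smallest} intersection number $\operatorname{F}_{n+1}$ (your description ``replaces the edge with $a+b$ intersections'' is inconsistent with your own stated result $\operatorname{LST}(b,a+b,a+2b)$, which in fact removes the edge of weight $a$). In the new outermost tetrahedron one inserts $\operatorname{F}_{n+2}$ triangles of each of two types and $\operatorname{F}_{n+1}$ quadrilaterals of one type; the maximum coordinate is therefore the triangle count $\operatorname{F}_{n+2}$, which equals the \emph{smallest} parameter of $\mathcal{B}_{n+1}$, not the largest. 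This is why the base case has maximum coordinate $\operatorname{F}_2 = 1$, not $3$.

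Second, and more importantly, your two proposed extremality arguments are both harder than necessary. The paper's argument is a clean induction that you should adopt: suppose $\mathbf{d}_{n+1} = \lambda\mathbf{s} + \mu\mathbf{t}$ for normal surfaces $\mathbf{s},\mathbf{t}$ in $\mathcal{B}_{n+1}$. Restricting all three to the first $n$ tetrahedra gives a decomposition of $\mathbf{d}_n$ in $\mathcal{B}_n$, and by the inductive hypothesis $\mathbf{s}$ and $\mathbf{t}$ restrict to rational multiples of $\mathbf{d}_n$. One then writes down the six matching equations across the two newly glued faces and observes that, together with the quadrilateral constraints, they force a \emph{unique} extension of any multiple of $\mathbf{d}_n$ into the $(n{+}1)$-st tetrahedron; hence $\mathbf{s}$ and $\mathbf{t}$ are globally multiples of $\mathbf{d}_{n+1}$. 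Your option~(a) (``unique connected surface in the meridian slope'') is not literally true without further qualification, and your option~(b) (explicit supporting hyperplane) is possible but considerably more work than the restriction-plus-unique-extension argument.
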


The family $\mathcal{B}_{n}$ consists of layered solid tori of type
$\operatorname{LST}
(\operatorname{F}_{n+1},\operatorname{F}_{n+2},\operatorname{F}_{n+3})$
and each vertex 
normal surface $\mathbf{d}_{n}$ is the corresponding meridian disc.
See the appendix for details of the proof.

The key idea for the construction of a closed family $\mathcal{C}_{n}$
of $n$-tetrahedron triangulations containing a 
vertex normal surface with exponentially growing coordinates is to find a small $m$-tetrahedron triangulation 
$\mathcal{E}$ with the same boundary as the family of layered solid tori $\mathcal{B}_{n}$ acting like a type of 
plug. By this we mean that $\mathcal{E}$ contains a normal surface intersecting $\partial \mathcal{E}$ in the same way as
$\mathbf{d}_{n}$ intersects $\partial \mathcal{B}_{n}$ for all $n \geq 1$. This  
gives rise to a vertex normal surface in $\mathcal{C}_{n}$ with maximum
coordinate greater than or equal to $\operatorname{F}_{n+1}$. 
Since $m$ is constant, this gives the same asymptotic lower bound
on $\kappa(\tri)$ for closed triangulations as for bounded triangulations.

\begin{remark}[The triangulation $\mathcal{E}$]
	There is a $4$-tetrahedron bounded triangulation $\mathcal{E}$ with 
	$\partial \mathcal{E} = \partial \mathcal{B}_m$ for all $m$ which contains two vertex normal surfaces 
	$\mathbf{s}$ and $\mathbf{t}$ that can be combined into a normal surface intersecting $\partial \mathcal{E}$
	in the same pattern as $\mathbf{d}_m$. The face pairing graph of $\mathcal{E}$ is shown in Figure 
	\ref{fig:face_pairing_graphs}, and the intersection of $\mathbf{s}$ and $\mathbf{t}$ with its 
	boundary $\partial \mathcal{E}$	is shown in Figure \ref{fig:sAndt}. A detailed description of $\mathcal{E}$
	can be found in the appendix.
	\begin{figure}[tb]
		\begin{center}
	       	   \includegraphics[width=.3\textwidth]{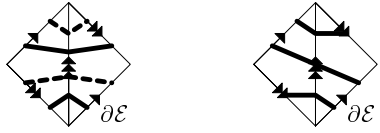}
	 	   \caption{Intersection of $\mathbf{s}$ (left) and $\mathbf{t}$ (right) with $\partial \mathcal{E}$ \label{fig:sAndt}}
		\end{center}
	\end{figure}
\end{remark}

\begin{theorem}
	\label{thm:closedLargeCoords}
	There is a family $\mathcal{C}_{n}$ of closed $1$-vertex triangulations with $n$ tetrahedra, $n \geq 5$, each 
	containing a vertex normal surface with maximum coordinate at least $\operatorname{F}_{n-3}$
	if $n \equiv 2 \bmod 3$ or at least $2\operatorname{F}_{n-3}$ otherwise.
\end{theorem}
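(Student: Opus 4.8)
The plan is to realise $\mathcal{C}_n$ by capping off the layered solid torus $\mathcal{B}_{n-4}$ of Theorem~\ref{thm:boundedLargeCoords} with the four–tetrahedron plug $\mathcal{E}$, gluing $\partial\mathcal{B}_{n-4}$ to $\partial\mathcal{E}$. Since $\partial\mathcal{E}=\partial\mathcal{B}_m$ for every $m$, this gluing is well defined as soon as $n-4\ge 1$, i.e.\ $n\ge 5$, and it produces a triangulation with $(n-4)+4=n$ tetrahedra and no boundary faces. First I would verify that $\mathcal{C}_n$ is a closed $1$-vertex $3$-manifold triangulation: manifoldness away from the vertex is automatic, the single-vertex property follows because both pieces carry their only vertex on the common boundary torus, and a short check of the induced vertex link shows it is a sphere. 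The explicit gluing tables for this (and the corresponding face pairing graph, cf.\ Figure~\ref{fig:face_pairing_graphs}) would be deferred to the appendix.

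Next I would exhibit the large-coordinate surface. Let $\mathbf{D}_n$ be the vector on $\mathcal{C}_n$ that agrees with the meridian disc $\mathbf{d}_{n-4}$ on the tetrahedra coming from $\mathcal{B}_{n-4}$ and with $\mathbf{s}+\mathbf{t}$ on the tetrahedra coming from $\mathcal{E}$. The matching equations interior to $\mathcal{B}_{n-4}$ hold by Theorem~\ref{thm:boundedLargeCoords}; those interior to $\mathcal{E}$ hold because $\mathbf{s}$ and $\mathbf{t}$ are normal surfaces of $\mathcal{E}$; and the new matching equations along the glued faces hold precisely because $\mathbf{s}+\mathbf{t}$ meets $\partial\mathcal{E}$ in the same arc pattern in which $\mathbf{d}_{n-4}$ meets $\partial\mathcal{B}_{n-4}$ (the defining property of $\mathcal{E}$; see Figure~\ref{fig:sAndt}). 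The quadrilateral constraints are local to each tetrahedron and already hold on either side, so $\mathbf{D}_n$ is a genuine embedded normal surface of $\mathcal{C}_n$ with a coordinate equal to the maximum coordinate of $\mathbf{d}_{n-4}$, namely $\operatorname{F}_{(n-4)+1}=\operatorname{F}_{n-3}$.

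The real work, and the step I expect to be the main obstacle, is upgrading ``$\mathcal{C}_n$ has a normal surface with a coordinate $\ge\operatorname{F}_{n-3}$'' to the same statement for a \emph{vertex} normal surface, and then sharpening the constant to $2\operatorname{F}_{n-3}$ outside the class $n\equiv 2\bmod 3$. Writing $\mathbf{D}_n=\sum_i\lambda_i\mathbf{v}_i$ as a positive rational combination of vertex normal surfaces and restricting to $\mathcal{B}_{n-4}$, I would use the well-understood structure of normal surfaces in a layered solid torus (only the meridian disc carries the large quadrilaterals that realise $\operatorname{F}_{n-3}$, the other normal surfaces being boundary-parallel annuli/M\"obius bands and the vertex link) to show that only a bounded number of the summands $\mathbf{v}_i$ can carry that quadrilateral; indeed the splitting of $\mathbf{D}_n$ mirrors the decomposition $\mathbf{D}_n|_{\mathcal{E}}=\mathbf{s}+\mathbf{t}$ into the two vertex normal surfaces of $\mathcal{E}$, so some $\mathbf{v}_i$ must have a coordinate comparable to $\operatorname{F}_{n-3}$. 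Pinning down the exact constant requires a case analysis of how the three boundary edges of $\partial\mathcal{B}_{n-4}$ (whose meridian weights are $\operatorname{F}_{n-3},\operatorname{F}_{n-2},\operatorname{F}_{n-1}$, with the labelling of which edge carries which weight cycling with period $3$ in $n$) line up with the arcs of $\mathbf{s}$ and $\mathbf{t}$ on $\partial\mathcal{E}$, together with checking in each case whether the resulting extreme ray is already represented by a vertex normal surface or only by its double: in two residue classes this yields a coordinate of $2\operatorname{F}_{n-3}$, while in the class $n\equiv 2\bmod 3$ one can only guarantee $\operatorname{F}_{n-3}$. This residue-dependent matching, and the accompanying ``minimal integer point on the ray'' bookkeeping, is the delicate part; the remaining verifications are the routine (if tedious) confirmations that the glued objects are genuine triangulations and genuine normal surfaces.
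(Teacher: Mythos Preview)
Your overall architecture matches the paper: $\mathcal{C}_n=\mathcal{B}_{n-4}\cup_\partial\mathcal{E}$, then extend the meridian disc across $\mathcal{E}$ and argue the result lies on an extremal ray. But the specific extension you write down is wrong, and this error propagates into your explanation of the $\bmod\ 3$ phenomenon.

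The surface on the $\mathcal{E}$ side is not $\mathbf{s}+\mathbf{t}$. The boundary curve of $\mathbf{d}_{n-4}$ meets the three edges of $\partial\mathcal{B}_{n-4}$ in $\operatorname{F}_{n-3},\operatorname{F}_{n-2},\operatorname{F}_{n-1}$ points, so its arc pattern on the two boundary triangles grows with $n$; a single copy of $\mathbf{s}+\mathbf{t}$ has a fixed arc pattern and cannot match this for general $n$. The correct combination is $\operatorname{F}_{n-4}\,\mathbf{t}+\tfrac12\operatorname{F}_{n-5}\,\mathbf{s}$, and this is where the residue class enters: $\operatorname{F}_{n-5}$ is even precisely when $n\equiv 2\bmod 3$, so only then is this combination integral, giving a M\"obius strip that caps $\mathbf{d}_{n-4}$ to a projective plane with maximum coordinate $\operatorname{F}_{n-3}$. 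In the other two residue classes one must double everything, obtaining a cylinder that caps $2\mathbf{d}_{n-4}$ to a sphere with maximum coordinate $2\operatorname{F}_{n-3}$. Your attribution of the $\bmod\ 3$ behaviour to a cyclic relabelling of boundary edges is not the mechanism; it is Fibonacci parity controlling integrality of the coefficient on $\mathbf{s}$.

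For the vertex-normal step, the paper's argument is tighter than the ``some summand must carry the big coordinate'' strategy you outline. Since $\mathbf{d}_{n-4}$ is already extremal in $\mathcal{B}_{n-4}$, any decomposition $\mathcal{S}_n=\lambda\mathbf{u}+\mu\mathbf{v}$ restricts on the solid-torus side to rational multiples of $\mathbf{d}_{n-4}$; one then checks, using the short list of vertex normal surfaces of $\mathcal{E}$ and their quadrilateral/compatibility constraints, that the extension across $\mathcal{E}$ matching this boundary data is \emph{unique}. Hence $\mathbf{u}$ and $\mathbf{v}$ are themselves multiples of $\mathcal{S}_n$, so $\mathcal{S}_n$ is a vertex normal surface outright, and no comparability-of-summands bookkeeping is needed.
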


As outlined above, we construct $\mathcal{C}_{n}$ by gluing
$\mathcal{B}_{n-4}$ and $\mathcal{E}$ along their boundary tori.
If $n \equiv 2 \bmod 3$, the meridian disc 
$\mathbf{d}_{n-4}$ glued to a combination of $\mathbf{s}$ and $\mathbf{t}$ yields a vertex normal projective 
plane or, if $n \equiv 0,1 \bmod 3$, twice $\mathbf{d}_{n-4}$
with a combination of $\mathbf{s}$ and $\mathbf{t}$ yields a vertex normal 
sphere; the maximum coordinates are then as stated.
See the appendix for details.

We note that the vertex normal sphere from above (in the case
$n \equiv 0,1 \bmod 3$) is the only non-vertex linking
normal sphere in $\mathcal{C}_{n}$. Detecting these normal surface types is one of the key tasks in important $3$-manifold problems
such as prime decomposition. Hence, the family $\mathcal{C}_{n}$ is an example for a case
where, in order to prove the existence of such a normal sphere, dealing with exponentially 
large normal coordinates cannot be avoided. This is a hint towards the conjecture that these problems are intrinsically
hard to solve using normal surface enumeration methods.

\section{Experimental behaviour}
\label{sec:expt}

\begin{table}[tb]
\centering
\small
\begin{tabular}{c|r|r|r}
Input size $n$ &
\multicolumn{1}{c|}{Closed} &
\multicolumn{1}{c|}{Closed and 1-vertex} &
\multicolumn{1}{c}{Bounded} \\
\hline
1   &                4 &                3 &                 3 \\
2   &               17 &               12 &                17 \\
3   &               81 &               63 &               156 \\
4   &              577 &              433 &            2\,308 \\
5   &           5\,184 &           3\,961 &           45\,046 \\
6   &          57\,753 &          43\,584 &          995\,920 \\
7   &         722\,765 &         538\,409 &      25\,225\,447 \\
8   &      9\,787\,509 &      7\,148\,483 &     695\,134\,018 \\
9   &    139\,103\,032 &     99\,450\,500 & 19\,933\,661\,871 \\
10  & 2\,046\,869\,999 & 1\,430\,396\,979 & \\
\hline
Total&$n \leq 10$: 2\,196\,546\,921 & $n \leq 10$: 1\,537\,582\,427 &
    $n \leq 8$: \phantom{00}\,721\,402\,915 \\
& & & $n \leq 9$: 20\,655\,064\,786
\end{tabular}
\caption{The number of 3-manifold triangulations in the census}
\label{tab-census}
\end{table}

We turn now to an experimental study of the combinatorial and
algebraic complexities of vertex normal surfaces.
Our experimental data consists of \emph{all closed 3-manifold
triangulations} with $n \leq 10$~tetrahedra, and
\emph{all bounded 3-manifold triangulations} with $n \leq 8$~tetrahedra
(each appearing precisely once up to relabelling).
As shown in Table~\ref{tab-census}, this yields almost 3~billion
triangulations in total
($2\,196\,546\,921 + 721\,402\,915 = 2\,917\,949\,836$).
We also extract the $\sim 1.5$~billion \emph{1-vertex}
triangulations from the closed census for additional study.

Generating exhaustive censuses of all possible inputs
requires sophisticated algorithms and significant computational resources.
The $n \leq 10$-tetrahedron census of all closed triangulations
first appeared in \cite{burton11-genus} (which also describes
some of the underlying algorithms).
The $n \leq 8$-tetrahedron census of all bounded triangulations
is new to this paper.  Moreover, we have constructed this bounded census
for $n=9$, with over 20~billion triangulations; however, we only use
$n \leq 8$ for our experiments because the subsequent analysis
of normal surfaces for $n=9$ remains out of our computational reach
(for $n=8$ this analysis already consumed years of CPU time).

\begin{table}[tb]
\centering
\small
\begin{tabular}{c|rr|rr|rr|rr|rr|rr}
Input &
\multicolumn{8}{c|}{Combinatorial complexity $\sigma(\tri)$} &
\multicolumn{4}{c}{Algebraic complexity $\kappa(\tri)$} \\
\cline{2-13}
size $n$ &
\multicolumn{2}{c|}{Closed} &
\multicolumn{2}{c|}{Closed 1-vertex} &
\multicolumn{2}{c|}{Bounded} &
\multicolumn{2}{c|}{Bounded, discs} &
\multicolumn{2}{c|}{Closed} &
\multicolumn{2}{c}{Bounded} \\
&
Max & Avg &
Max & Avg &
Max & Avg &
Max & Avg &
Max & Avg &
Max & Avg \\
\hline
1  &    3 &   2.0 &    2 &   1.7 &    7 &   5.0 &    7 &  4.0 &  1 & 1.0 &  1 & 1.0 \\
2  &    7 &   3.9 &    4 &   3.3 &   14 &   8.2 &   14 &  5.2 &  2 & 1.2 &  2 & 1.3 \\
3  &   11 &   5.5 &    8 &   4.9 &   35 &  14.0 &   27 &  7.0 &  3 & 1.5 &  3 & 1.7 \\
4  &   18 &   8.8 &   16 &   7.8 &   85 &  31.3 &   69 & 11.6 &  4 & 1.8 &  6 & 2.5 \\
5  &   36 &  13.3 &   32 &  12.0 &  236 &  69.5 &  176 & 20.2 &  7 & 2.1 & 12 & 3.4 \\
6  &   70 &  20.8 &   64 &  18.6 &  688 & 152.6 &  440 & 34.8 & 10 & 2.3 & 20 & 4.3 \\
7  &  144 &  32.2 &  128 &  28.8 & 1943 & 376.6 & 1109 & 61.7 & 16 & 2.6 & 36 & 5.8 \\
8  &  291 &  50.2 &  256 &  44.7 & 5725 & 947.4 & 2768 &112.4 & 26 & 2.9 & 65 & 7.5 \\
9  &  584 &  78.5 &  512 &  69.4 &      &       &      &      & 42 & 3.2 &    &     \\
10 & 1175 & 123.2 & 1024 & 108.2 &      &       &      &      & 68 & 3.6 &    &     \\ 
\hline
Growth &
$2.03^n$ & $1.56^n$ & $2^n$ & $1.56^n$ & $2.73^n$ & $2.23^n$ & $2.45^n$ &
$1.69^n$ & $1.62^n$ & $1.14^n$ & $1.81^n$ & $1.34^n$
\end{tabular}
\caption{Experimental worst-case and average-case results}
\label{tab-expt}
\end{table}

Table~\ref{tab-expt} summarises our experimental results, and gives
worst-case and average-case measurements (labelled \emph{Max} and \emph{Avg}
respectively) for
the quantities $\sigma(\tri)$ and $\kappa(\tri)$ in our various settings.
Each measurement is taken over the relevant census of triangulations from
Table~\ref{tab-census}.  The \emph{Closed} and \emph{Bounded} columns
refer to all closed or bounded triangulations respectively;
in the \emph{Closed 1-vertex} column we restrict our attention to
1-vertex triangulations of closed manifolds, and in the
\emph{Bounded, discs} column we only count vertex normal discs (not all
vertex normal surfaces).  We have also measured the algebraic complexity
$\kappa(\tri)$ in the 1-vertex and discs-only settings,
but we omit the details due to space constraints;
see the appendix for the details.

The final row of Table~\ref{tab-expt} gives a ``best estimate'' of the
exponential growth rate of each quantity with respect to $n$
(we just list the base of the exponential, ignoring any
coefficients or polynomial factors).
Most growth rates are estimated by linear regression%
    \footnote{Specifically, we take a weighted linear regression of
    $\log \sigma$ or $\log \kappa$ as a function of $n$.  The weights are
    taken to be $1,\ldots,n$, in order to limit the influence of
    anomalous small cases.},
though for cases where the worst cases matches a known family of
triangulations (see below) we give the corresponding known rate.

We can make some broad observations from Table~\ref{tab-expt}:
\begin{itemize}
    \item The average-case scenarios grow at a
    significantly slower rate than the worst-case scenarios,
    sometimes astonishingly so.
    This is consistent with past observations
    in which ``typical'' triangulations
    exhibit significantly smaller complexity properties than
    expected (see Section~\ref{sec:intro}).

    \item For closed manifolds, 1-vertex triangulations only give a very
    slight improvement: the worst case drops from the
    $\Theta(17^{n/4}) \simeq \Theta(2.03^n)$ family described in
    \cite{burton10-complexity} down to the $\Theta(2^n)$ family of
    Theorem~\ref{thm:pow2}.
    The closeness of these results is surprising, since
    the theoretical bounds on $\sigma(\tri)$ for 1-vertex triangulations
    are much smaller than the general bounds (see below), and
    algorithms for working with them are often much simpler
    \cite{jaco03-0-efficiency}.

    \item Bounded triangulations exhibit higher complexity
    properties than their closed counterparts.  For the combinatorial
    complexity $\sigma(\tri)$ this discrepancy is very pronounced---even
    the average case for bounded triangulations is well above
    the worst case for closed triangulations.
    This is again consistent with past experiences in working with
    normal surface algorithms \cite{burton12-ws}.
    Restricting our attention to normal discs (e.g., for unknot recognition)
    does alleviate this problem somewhat.
\end{itemize}

Table~\ref{tab-summary} compares the experimental behaviour of the
worst-case $\sigma(\tri)$ against its best known theoretical
lower and upper bounds (here ``lower bounds'' refers to families of
pathological triangulations with the highest known growth rate of
$\sigma(\tri)$, such as those constructed in
Section~\ref{sec:lower}).

Regarding $\sigma(\tri)$: the lower bound of
$\Theta(17^{n/4}) \simeq \Theta(2.03^n)$ is known from
\cite{burton10-complexity}, and the remaining three lower bounds are new
to this paper (Theorem~\ref{thm:pow2}, Corollary~\ref{cor:bdrygrowth} and
Theorem~\ref{thm:pathFamily}).  The first two upper bounds are taken from
\cite{burton11-asymptotic}; the final two $O(64^n)$ bounds are well known
but do not appear in the literature (for a proof we refer to the appendix).
Regarding $\kappa(\tri)$: all four lower bounds of
$\Theta([(1+\sqrt{5})/2]^n) \simeq \Theta(1.62^n)$
are new to this paper
(Theorems~\ref{thm:boundedLargeCoords} and \ref{thm:closedLargeCoords}),
and all four upper bounds are taken from \cite{burton10-tree}.

\begin{table}[tb]
\centering
\small
\begin{tabular}{l|rcr|rcr}
&
\multicolumn{3}{c|}{Combinatorial complexity $\sigma(\tri)$} &
\multicolumn{3}{c}{Algebraic complexity $\kappa(\tri)$} \\
\cline{2-7}
& \multicolumn{1}{c}{Lower}
& \multicolumn{1}{c}{Experimental}
& \multicolumn{1}{c|}{Upper}
& \multicolumn{1}{c}{Lower}
& \multicolumn{1}{c}{Experimental}
& \multicolumn{1}{c}{Upper} \\
& \multicolumn{1}{c}{bound}
& \multicolumn{1}{c}{growth}
& \multicolumn{1}{c|}{bound}
& \multicolumn{1}{c}{bound}
& \multicolumn{1}{c}{growth}
& \multicolumn{1}{c}{bound} \\
\hline
Closed &
$\Omega(2.03^n)$ &
\multicolumn{1}{c}{$\longleftarrow$} &
$O(14.556^n)$ &
$\Omega(1.62^n)$ &
\multicolumn{1}{c}{$\longleftarrow$} &
$O(3.17^n)$ \\
Closed 1-vertex &
$\Omega(2^n)$ &
\multicolumn{1}{c}{$\longleftarrow$} &
$O(4.852^n)$ &
$\Omega(1.62^n)$ &
\multicolumn{1}{c}{$\longleftarrow$} &
$O(3.17^n)$ \\
Bounded &
$\Omega(2.37^n)$ &
$\simeq 2.73^n$ &
$O(64^n)$ &
$\Omega(1.62^n)$ &
$\simeq 1.82^n$ &
$O(31.63^n)$ \\
Bounded, discs only &
$\Omega(2^n)$ &
$\simeq 2.45^n$ &
$O(64^n)$ &
$\Omega(1.62^n)$ &
\multicolumn{1}{c}{$\longleftarrow$} &
$O(31.63^n)$ \\
\end{tabular}
\caption{Summary of worst-case theoretical and experimental results}
\label{tab-summary}
\end{table}

Here we see that the experimental growth rates are much closer to
the lower bounds than the upper bounds; in particular, an arrow
($\longleftarrow$) indicates that the experimental worst-case growth rate is
\emph{identical} to the best lower bound (up to a constant factor).
This invites the following conjectures:

\begin{conjecture}
    For closed triangulations, the maximum number of vertex normal
    surfaces $\sigma(\tri)$ for any given $n$ grows at an asymptotic rate of
    $\Theta(17^{n/4}) \simeq \Theta(2.03^n)$, and for closed
    1-vertex triangulations this reduces to $\Theta(2^n)$.

    For closed triangulations as well as closed 1-vertex
    triangulations, the maximum coordinate of any vertex normal surface
    $\kappa(\tri)$ for any given $n$ grows at an asymptotic rate of
    $\Theta([(1+\sqrt{5})/2]^n) \simeq \Theta(1.62^n)$.
    For bounded triangulations, the maximum coordinate of any vertex normal
    disc for any given $n$ likewise grows at an asymptotic rate of
    $\Theta([(1+\sqrt{5})/2]^n) \simeq \Theta(1.62^n)$.
\end{conjecture}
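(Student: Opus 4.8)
The lower bounds asserted in the conjecture are already in hand: the $\Theta(17^{n/4})$ family of \cite{burton10-complexity}, the binomial family $\mathcal A_n$ of Theorem~\ref{thm:pow2} with exactly $2^n$ vertex normal surfaces, and the families $\mathcal B_n$ and $\mathcal C_n$ of Theorems~\ref{thm:boundedLargeCoords} and~\ref{thm:closedLargeCoords}, whose vertex normal surfaces carry coordinates of size $\Theta(\phi^n)$ with $\phi=(1+\sqrt5)/2$. The entire content of the conjecture is therefore to prove \emph{matching upper bounds} on the worst case of $\sigma(\tri)$ and $\kappa(\tri)$. In each case the plan is to sharpen the known polytope-theoretic estimates of \cite{burton11-asymptotic,burton10-tree} --- which are loose by a multiplicative constant in the exponential base precisely because they treat the matching equations as an arbitrary linear system --- by feeding in the structure that comes from $\tri$ being a genuine $3$-manifold triangulation.

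For $\sigma(\tri)$ I would pass to quadrilateral coordinates, so that each vertex normal surface is a point of $\R_{\geq 0}^{3n}$ and the projective solution space is cut out by the quadrilateral matching equations together with the quadrilateral constraints (at most one of the three quad types is nonzero in each tetrahedron). The constraints partition the solution cone into at most $3^n$ sub-cones, one per quad type selection, and within each selection the effective ambient dimension drops to $n$; the vertices to be counted are the extreme rays of an $n$-dimensional cone cut out by a sparse, bounded-entry matrix whose support follows the face-pairing graph. The plan has two parts: first, show that for a $3$-manifold triangulation all but a vanishing fraction of quad type selections yield an empty or low-dimensional sub-cone, using the fact that edge links are circles, so that the equations around each edge form a cyclic system infeasible for most sign patterns; and second, bound the number of extreme rays of each surviving sub-cone by a local product over a tree decomposition of the face-pairing graph, with each tetrahedron contributing a factor of $2$ in the $1$-vertex case, and each four-tetrahedron block contributing the factor $17^{1/4}$ in the general case --- exactly the blocks from which $\mathcal A_n$ and the $\Theta(17^{n/4})$ family are assembled. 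The multi-vertex case is the more delicate one, since the matching equations gain slack at each extra vertex and one must separately absorb the vertex-linking surfaces; here I would reduce the count for a general triangulation to the $1$-vertex count plus a bounded correction per nontrivial vertex.

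For $\kappa(\tri)$ the route is through the arithmetic of extreme rays. A vertex normal surface lies on a ray of the solution cone, and once its coordinates are scaled to primitive integers each of them is, up to sign, a maximal minor of the matching-equation matrix restricted to the active columns. A direct Hadamard bound gives $O(c^n)$ with $c$ far above $\phi$, so the plan is to replace it by a continued-fraction-style recurrence driven by the face-pairing graph: cut the graph along a thin separator, expand the minor along the separating columns, and argue that a single tetrahedron can contribute at most the small Fibonacci-type interaction seen in a layered solid torus, so that the minors along a chain of $m$ tetrahedra satisfy $M_m \le M_{m-1} + M_{m-2}$ and hence $M_m = O(\phi^m)$; branching in the graph only helps, since a node of degree $d$ splits a would-be product into $d$ independent Fibonacci chains. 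Justifying ``a tetrahedron contributes at most the Fibonacci block'' is the step that genuinely uses $3$-manifold-ness together with the extra hypothesis of each setting --- closedness for the first part of the conjecture, and the disc condition for the bounded part --- and is exactly where one must exclude the more elaborate local interactions responsible for the larger $\simeq 1.82^n$ growth observed for general vertex normal surfaces in bounded triangulations. The bounded vertex normal disc statement would then match the $\Theta(\phi^n)$ lower bound of Theorem~\ref{thm:boundedLargeCoords}, and the closed statement the lower bound of Theorem~\ref{thm:closedLargeCoords}.

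In every part, the main obstacle is exactly the one flagged in Section~\ref{sec:intro}: turning ``$\tri$ is a $3$-manifold triangulation'' into a usable combinatorial or linear-algebraic restriction on the matching-equation matrix. The crux of any proof of the conjecture is a single structural lemma --- most plausibly phrased in terms of edge links, or in terms of the face-pairing graph being $4$-regular together with manifold gluing data --- asserting that the worst case is always realised by the layered-solid-torus and binomial building blocks from which the lower-bound families are assembled, so that the local contribution of each tetrahedron is provably at most $2$ (for $\sigma$ in the $1$-vertex case), $17^{1/4}$ per four tetrahedra (for $\sigma$ in general), or $\phi$ (for $\kappa$ in the closed and disc settings). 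I expect that lemma to be the entire difficulty; once it is available, the global bounds follow by a routine induction over a tree decomposition of the face-pairing graph, and the experimental data of Section~\ref{sec:expt} strongly suggests that no slack is lost along the way.
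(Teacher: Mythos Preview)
The statement you are addressing is a \emph{conjecture}, not a theorem: the paper offers no proof whatsoever, only the experimental evidence of Table~\ref{tab-expt} together with the explicit lower-bound families. So there is no ``paper's own proof'' to compare against, and any honest write-up must make clear that the upper bounds remain open.

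Your proposal is not a proof but a research programme, and you are candid about this (``I expect that lemma to be the entire difficulty''). That candour is appropriate. However, the central structural lemma you posit --- that the local contribution of each tetrahedron to $\sigma$ or $\kappa$ is provably bounded by the constants $2$, $17^{1/4}$, or $\phi$ realised by the extremal families --- is not a reduction of the conjecture but essentially a restatement of it. Nothing in the paper, in \cite{burton11-asymptotic}, or in \cite{burton10-tree} supplies a mechanism for converting ``$\tri$ is a $3$-manifold'' into the kind of per-tetrahedron or per-block bound you need; indeed, the paper explicitly flags in Section~\ref{sec:intro} that obtaining tight bounds ``requires a deep interaction between topology, normal surfaces and polytope theory'' that is not currently available. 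Your tree-decomposition induction and minor-expansion recurrence are plausible scaffolding, but both rest entirely on that unproven local lemma, and the claim that ``branching in the graph only helps'' for the $\kappa$ bound is asserted without justification and is not obviously true (branching can create new minors that are sums of products of chain minors, not merely independent chains).

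In short: you have correctly identified what would need to be proven and sketched a shape an argument might take, but the proposal does not close the gap between the known upper bounds ($O(14.556^n)$, $O(4.852^n)$, $O(3.17^n)$, etc.) and the conjectured ones. The statement should remain labelled as a conjecture.
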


For closed 1-vertex triangulations, our experimental data gives an
even stronger result:

\begin{theorem}
    \label{thm:binomialexpt}
    For closed 1-vertex triangulations, the maximum number of vertex
    normal surfaces $\sigma(\tri)$ for any given $n \leq 10$ is precisely
    $2^n$, and is attained by the binomial triangulations
    $\mathcal{A}_n$ as described in Section~\ref{sec:closedTrigs}.
\end{theorem}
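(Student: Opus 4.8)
The plan is to combine the explicit family from Theorem~\ref{thm:pow2} with an exhaustive computer search over the census of Section~\ref{sec:expt}. The lower bound is immediate: each binomial triangulation $\mathcal{A}_n$ is a closed 1-vertex triangulation of the 3-sphere with $n$ tetrahedra and, by Theorem~\ref{thm:pow2}, satisfies $\sigma(\mathcal{A}_n) = 2^n$. Hence the maximum of $\sigma(\tri)$ over all closed 1-vertex triangulations with $n$ tetrahedra is at least $2^n$, and this maximum is attained by $\mathcal{A}_n$. It remains only to prove the matching upper bound for $n \le 10$.

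For the upper bound I would proceed purely by exhaustion. First, generate the complete census of closed 1-vertex 3-manifold triangulations for each $n \le 10$ (there are $1\,537\,582\,427$ of them in total, distributed as in Table~\ref{tab-census}), each appearing exactly once up to relabelling; this uses the census-generation machinery already described for Section~\ref{sec:expt}. Then, for every triangulation $\tri$ in this list, enumerate all vertex normal surfaces --- equivalently, the vertices of the projective solution space cut out by the matching and quadrilateral constraints --- and record $\sigma(\tri)$. Taking the maximum of $\sigma(\tri)$ over each fixed $n$ yields exactly the values $2, 4, 8, \ldots, 1024$ reported in the ``Closed 1-vertex'' column of Table~\ref{tab-expt}, i.e.\ precisely $2^n$. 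Together with the lower bound this shows the maximum is exactly $2^n$ for every $n \le 10$, and since $\mathcal{A}_n$ realises this value, the ``attained by'' claim follows.

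The main obstacle is computational rather than conceptual. Enumerating vertex normal surfaces is an exponential-time task in the worst case, and carrying it out for roughly one and a half billion triangulations is only feasible using the modern enumeration algorithms referenced earlier; even then it consumes a large amount of CPU time. A secondary point requiring care is the correctness and completeness of the census itself: one must be certain that the generation procedure produces every isomorphism class of closed 1-vertex 3-manifold triangulation with $n \le 10$ tetrahedra exactly once, with no omissions or duplicates, since any gap would invalidate the upper bound. Both issues are handled by the established census and enumeration infrastructure, but they are where the substance of the argument lies. If desired, one could additionally record which triangulations in the census achieve the maximum and check directly that $\mathcal{A}_n$ occurs among them up to isomorphism, although this is already guaranteed by Theorem~\ref{thm:pow2}.
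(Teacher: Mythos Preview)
Your proposal is correct and matches the paper's approach: Theorem~\ref{thm:binomialexpt} is presented in the paper as an experimental result, established exactly as you describe---the lower bound from Theorem~\ref{thm:pow2}, the upper bound from exhaustive enumeration over the census of Table~\ref{tab-census}, with the maxima recorded in Table~\ref{tab-expt}.
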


\begin{conjecture}
    Theorem~\ref{thm:binomialexpt} is true for all positive integers $n$.
\end{conjecture}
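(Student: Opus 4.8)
The lower-bound half of the statement is exactly Theorem~\ref{thm:pow2}, so what remains open is the matching \emph{upper} bound: every closed $1$-vertex triangulation $\tri$ with $n$ tetrahedra satisfies $\sigma(\tri)\le 2^n$, ideally with equality only for $\mathcal{A}_n$. Since the best general upper bound currently available in this setting is $O(4.852^n)$ \cite{burton11-asymptotic} --- more than twice the conjectured base --- no argument that treats the projective solution space as a generic polytope can work; the proof must exploit the combinatorial rigidity of $1$-vertex triangulations. The plan is to (i) pass to quadrilateral coordinates and reduce the claim to a sharp count of quad vertex normal surfaces, (ii) stratify that count by ``quad-type support'', and (iii) establish the exact value $2^n-1$ there, preferably while simultaneously proving the uniqueness statement so that the extremal case forces $\tri\cong\mathcal{A}_n$.

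For step~(i): a closed $1$-vertex triangulation has Euler characteristic $0$ with $V=1$, $T=n$, $F=2n$, hence exactly $E=n+1$ edges. In Tollefson's quadrilateral coordinates the admissible set therefore lives in $\R_{\ge 0}^{3n}$ and is cut out by only $n+1$ edge-matching equations together with the quadrilateral constraints (at most one of the three quad types nonzero in each tetrahedron). Writing $\sigma_Q(\tri)$ for the number of quad vertex normal surfaces, the quadrilateral--standard correspondence \cite{burton09-convert} identifies the standard vertex normal surfaces with the vertex links together with canonical lifts of the quad vertex normal surfaces, so $\sigma(\tri)=\sigma_Q(\tri)+V$; as $\tri$ has a single vertex (whose link is the unique quad-free vertex normal surface), this reads $\sigma(\tri)=\sigma_Q(\tri)+1$, and it suffices to prove $\sigma_Q(\tri)\le 2^n-1$. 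This target is consistent with the binomial family: the genus-$0$ surface in $\mathcal{A}_n$ is the vertex link, while the $\binom{n}{k}$ surfaces of genus $k\ge 1$ have pairwise distinct quad supports, giving $\sigma_Q(\mathcal{A}_n)=\sum_{k\ge 1}\binom{n}{k}=2^n-1$ exactly.

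For steps~(ii)--(iii): the admissible quad set is a union of pointed cones $P_\tau$, one for each ``quad-type vector'' $\tau$ assigning to each tetrahedron either no quad or one of its three quad types, and every quad vertex normal surface is an extremal ray of some $P_\tau$. Two things must then be shown, and neither is routine. First, a \emph{support bound}: because each of the $n+1$ edge equations couples the quad types of the (few) tetrahedra incident to that edge, only $2^n-1$ of the $\binom{n}{k}3^k$ candidate supports $\tau\neq 0$ can admit a nonnegative solution at all. Second, a \emph{multiplicity bound}: each realisable $P_\tau$ contributes at most one vertex (equivalently, the $n+1$ equations restricted to the coordinate subspace selected by $\tau$ have nonnegative-solution space that is a single ray). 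Summing over $\tau$ then yields $\sigma_Q(\tri)\le 2^n-1$, with equality forcing the support pattern of $\mathcal{A}_n$.

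The hard part --- and the reason this is still a conjecture --- is the support bound, i.e., explaining why the count collapses from $3^n$ (or $4^n$) down to exactly $2^n$. For $\mathcal{A}_n$ this collapse comes from a clean per-tetrahedron dichotomy (``add a genus-bearing quad here or not''), but a general closed $1$-vertex triangulation has no such local product structure, and the $n+1$ edge equations interact globally through the face pairing graph. I expect the only viable route is an induction on $n$, with the cases $n\le 10$ supplied by Theorem~\ref{thm:binomialexpt}, in which one locates an unavoidable local configuration (an edge of low degree, a foldover, or a layering) whose removal yields a smaller closed $1$-vertex triangulation and whose effect on $\sigma$ can be bounded; proving the strengthened assertion ``$\sigma(\tri)\le 2^n$ with equality iff $\tri\cong\mathcal{A}_n$'' directly may in fact be easier, since the rigidity forced at equality is what would control the inductive step. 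By contrast, verifying the precise form of the quadrilateral--standard correspondence for $1$-vertex triangulations (step~(i)) and the multiplicity bound for low-dimensional cones should follow from existing techniques.
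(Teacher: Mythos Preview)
The paper offers no proof of this statement: it is explicitly presented as an open conjecture, backed only by the exhaustive computation for $n\le 10$ (Theorem~\ref{thm:binomialexpt}) and the lower-bound construction (Theorem~\ref{thm:pow2}). Your proposal is, by your own account, a research plan rather than a proof, and you correctly flag the ``support bound'' as the genuine obstruction. In that sense there is nothing to compare---neither you nor the paper closes the gap.

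Two remarks on the plan itself. Your step~(i) is sound: the standard--quadrilateral correspondence of \cite{burton09-convert} does give $\sigma(\tri)=\sigma_Q(\tri)+1$ for closed $1$-vertex $\tri$, so the target $\sigma_Q(\tri)\le 2^n-1$ is the right reformulation. However, you are too quick to dismiss the multiplicity bound as routine. For a fixed quad-type vector $\tau$ with support of size $k$, the cone $P_\tau$ sits in $\R^k_{\ge 0}$ cut out by the restrictions of the $n+1$ edge equations; there is no general reason the rank of that restricted system should be $k-1$, and when it is smaller, $P_\tau$ has several extreme rays. Nothing in the $1$-vertex hypothesis obviously forces this rank condition, and existing techniques (McMullen-type bounds, the arguments in \cite{burton11-asymptotic}) do not give it. So the ``support bound $\times$ multiplicity bound'' factorisation may not cleanly separate the difficulty: both halves look hard, and it is plausible that any successful argument will have to control them jointly rather than prove each in isolation.
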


\section{Discussion}
\label{sec:conc}

As noted in the introduction, although we consider close to 3~billion
distinct triangulations, they are all relatively small with less than or equal to $10$
tetrahedra.  Despite this, there are reasons to believe that our
experimental results might generalise.
Because we allow more flexible triangulations (not just simplicial
complexes), this census contains a rich diversity of 3-manifolds
(including 5114 distinct closed $\mathbb{P}^2$-irreducible manifolds
\cite{burton11-genus,matveev03-algms}).
Moreover, several of the patterns that we see in Table~\ref{tab-expt}
(in particular Theorem~\ref{thm:binomialexpt}) are established early on
and are remarkably consistent.

As an exception, in the case of arbitrarily bounded manifolds,
a greater number of tetrahedra seems to allow a better choice for the starting triangulation $\mathcal{G}_0$
in Lemma~\ref{lem:treeLemma} in order to obtain a higher exponential base.

As seen in Table~\ref{tab-summary}, there is still a long way to go
before lower bounds and upper bounds on worst-case complexities converge.
Not only does this paper produce the first
explicit lower bounds in several computationally important settings, but it
also gives strong experimental evidence that these lower bounds are close to
(or even exactly) tight.  This suggests that it is now the upper bounds
that require significant improvement, inviting new directions of
research with a rich interplay between topology, polytopes and
complexity theory.

%
%


\bibliographystyle{amsplain}
\bibliography{pure}

\newpage

\appendix
\section*{Appendix: Additional proofs}
\subsection*{Additional Preliminaries}

In order to fully understand the detailed proofs given in this appendix
we need some more terminology concerning triangulations and normal surface theory.

From Section \ref{sec:prelim} we know that normal surfaces within a triangulation
can be described by their normal coordinates which, due to the matching
equations and the non-negativity constraints,
lie within a high-dimensional polyhedral cone.
However, not all of the integer points in this cone
give rise to a normal surface. Those normal coordinates which in fact do belong to
a normal surface are called \emph{admissible}.\footnote{%
    Some authors define ``admissible'' to include all non-negative
    multiples of such points (so admissible vectors need not have
    integer coordinates) \cite{burton09-convert,burton11-asymptotic}.
    In this paper we restrict the notion of admissibility to
    integer vectors only.}
Given an admissible vector in this cone, the
corresponding normal surface can be reconstructed uniquely
(up to certain forms of isotopy).

The reason why not all normal coordinates are admissible is because
there is no way to insert two quadrilaterals of different types into 
the same tetrahedron without having a non-empty intersection 
(cf.~Figure~\ref{fig:normalSubsets}). Since normal surfaces are defined
to be embedded, it follows that for every tetrahedron there can be at most one
type of normal quadrilateral (though there may be many quadrilaterals of
this one type). Translated to normal coordinates, this
means that for every tetrahedron of a triangulation at most one of the
three coordinates accounting for the quadrilaterals may be non-zero.
This condition is called the {\em quadrilateral constraints}, and it can
be shown that the admissible points---that is, the normal coordinates
that do correspond to a normal surface---are precisely those integer points
in the solution cone that satisfy the quadrilateral constraints.

Inside the cone it is straightforward to define the
\emph{sum} of two normal surfaces
as the sum of their normal coordinates: if two normal coordinates satisfy
the matching equations and non-negativity constraints then their 
sum will also satisfy them. However, not all of these sums represent 
normal surfaces, since they need not satisfy the quadrilateral
constraints. Because of this we call two normal surfaces {\em compatible}
if the sum of their normal coordinates is admissible.

\medskip
Due to the gluings of a triangulation, many vertices of the tetrahedra
may become identified together and, as a consequence, a small neighbourhood
of a vertex in a triangulation looks like a union of cusps of tetrahedra
which are themselves smaller tetrahedra.
Hence, the outer boundary (or frontier)
of this neighbourhood
is a collection of triangles which all are normal pieces.  Therefore
this outer boundary is a normal surface, which we call a
{\em vertex linking normal surface} or just a {\em vertex link}.
In a closed triangulation (which represents a closed manifold),
all vertex links must be spheres (otherwise the neighbourhood of some
vertex is not locally $\mathbb{R}^3$).
In a bounded triangulation (which represents a manifold with boundary), all vertex links must be spheres or discs.

\subsection*{The proof of Theorem \ref{thm:pow2}}

In this section, we give a full proof of a more detailed version of Theorem~\ref{thm:pow2}
from Section~\ref{sec:closedTrigs}:

\begin{theorem}
For each $n \ge 1$, there are $\binom{n}{k}$ vertex normal surfaces
in $\mathcal{A}_n$ of genus $k$. Therefore there are $\sigma(\mathcal A_n)=2^{n}$ vertex normal surfaces.
\end{theorem}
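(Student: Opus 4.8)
The plan is to work directly with the normal coordinate cone of $\mathcal{A}_n$, show that its cross-section polytope (the projective solution space) has exactly $2^n$ vertices indexed by subsets $S\subseteq\{1,\dots,n\}$, and then compute the genus of the vertex normal surface sitting over each vertex. First I would fix normal coordinates: a $7n$-tuple $(t_i^0,t_i^1,t_i^2,t_i^3,q_i^0,q_i^1,q_i^2)_{i=1}^{n}$ counting the triangles and quadrilaterals of each type in $\Delta_i$, and write down the matching equations, one for each of the three arc types on each of the $2n$ internal faces. These faces come in exactly two flavours, the self-folded faces $i(012)\!=\!i(013)$ and the chaining faces $i(123)\!=\!(i+1)(230)$, so the rigidity of $\mathcal{A}_n$ (each tetrahedron folded along a single edge, consecutive tetrahedra glued cyclically) should collapse the linear system to a short list of relations coupling the coordinates of $\Delta_i$ only to those of $\Delta_{i\pm 1}$.

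Concretely, I would show that the matching equations together with the quadrilateral constraints force any admissible normal surface of $\mathcal{A}_n$ to be, tetrahedron by tetrahedron, in one of two states: a ``$0$-state'' with no quadrilateral and a fixed triangle configuration (a local piece of the vertex link), or a ``$1$-state'' with exactly one quadrilateral type in a fixed configuration — the two mutually compatible ``subsurfaces'' of the sketch. Every admissible normal surface is then a non-negative combination of the $2^{n}$ vectors $\mathbf{s}_S$ obtained by taking the $1$-state exactly on the tetrahedra of $S$, with $\mathbf{s}_\emptyset$ the vertex-linking sphere. The heart of the argument is to verify that each $\mathbf{s}_S$ is itself a vertex normal surface: it spans an extremal ray because enough of its coordinates vanish to cut out a one-dimensional face of the solution cone, its coordinates are (as a routine gcd check shows) minimal among the integer points of that ray, and distinct subsets give distinct surfaces; conversely no vertex normal surface lies outside this list, since each must be admissible — hence one of the combinations above — and an extremal such combination can only be a single $\mathbf{s}_S$. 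I would also check that $\mathbf{s}_S$ is connected, being the vertex link with one handle attached along each tetrahedron of $S$, so that its genus is well-defined (and since every closed surface in $S^3$ is orientable, genus is a complete invariant).

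It then remains to identify the genus of $\mathbf{s}_S$, which I would obtain from its Euler characteristic $\chi(\mathbf{s}_S)=V-E+F$, where $F$ is the number of normal pieces of $\mathbf{s}_S$, $E$ the number of its normal arcs, and $V$ the number of its intersections with the edges of $\mathcal{A}_n$; feeding in the explicit piece counts of the $0$- and $1$-states should give $\chi(\mathbf{s}_S)=2-2|S|$, so $\mathbf{s}_S$ has genus $|S|$. Hence for each $k$ there are exactly $\binom{n}{k}$ vertex normal surfaces of genus $k$, and $\sigma(\mathcal{A}_n)=\sum_{k=0}^{n}\binom{n}{k}=2^{n}$. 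I expect the main obstacle to be the first step — proving that the self-folded, cyclically chained structure genuinely decouples the matching equations so that no ``exotic'' extremal ray can arise, and that the $\mathbf{s}_S$ are truly extremal and minimal; once that is secured, the connectedness check and the Euler-characteristic bookkeeping are routine, and the binomial sum is immediate.
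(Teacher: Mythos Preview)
Your plan is essentially the paper's own argument: identify two compatible local ``states'' in each self-folded tetrahedron (the paper calls them $\mathbf{u}_i$, a disc, and $\mathbf{v}_i$, a punctured torus), show that the matching equations force every normal surface to be a uniform-weight mixture of these across all tetrahedra, deduce that the extremal rays are exactly the $2^n$ binary choices, and read off the genus from an Euler-characteristic count. One small simplification you will discover on carrying it out: because the four local vertex surfaces in a folded $\Delta_i$ are already pairwise compatible, the quadrilateral constraints play no role and the problem is purely linear, so your ``$0$-state/$1$-state'' dichotomy comes from the matching equations alone rather than from quad constraints.
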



\begin{proof}
Consider a single tetrahedron $\Delta_i$ with two faces
identified as in Section \ref{sec:closedTrigs}. Define $\alpha$, $\beta$
and $\gamma$ to be loops surrounding vertices $0$, $1$ and $2$
respectively. The projective solution space has four vertex normal
surfaces $\bf{a}$, $\bf{b}$, $\bf{c}$ and $\bf{d}$.
All of these surfaces have a non-empty boundary which consists of a combination of
$\alpha$, $\beta$ and $\gamma$ and all of them are compatible.
Therefore we do not have to consider the quadrilateral constraints, which
simplifies the rest of the argument.

Let
$\mathbf{a}_i$, $\mathbf{b}_i$, $\mathbf{c}_i$, $\mathbf{d}_i$
denote the surfaces and
$\alpha_i$, $\beta_i$, $\gamma_i$ the boundary curves in the corresponding
$\Delta_i$. Any normal surface in $\mathcal A_n$ can be written as
$$\sum_{i=1}^{n} \tau_{a_i} \mathbf{a}_{i} + \tau_{b_i} \mathbf{b}_{i} +
\tau_{c_i} \mathbf{c}_{i} + \tau_{d_i} \mathbf{d}_{i}$$ 
for constants
$\tau_{a_i},\tau_{b_i},\tau_{c_i},\tau_{d_i}\ge0$ for $i=1,...,n$. Following
the argument in \cite{burton10-complexity}, there are only two ways
in which a vertex normal surface can meet each $\Delta_i$ and satisfy
the matching equations, namely
$\mathbf{u}_i=\mathbf{b}_i+\mathbf{c}_i+\mathbf{d}_i$ and
$\mathbf{v}_i=\mathbf{a}_i+\mathbf{d}_i$, both with boundary
$\alpha_i + \beta_i + \gamma_i$. It follows that every normal surface in
$\mathcal A_n$ can be described by 
$$ \sum_{i=1}^{n}p_{i} \mathbf{u}_{i} + q_{i}\mathbf{v}_{i} $$ 
where each $p_{i},q_{i} \ge 0$ and where
$\epsilon := p_{1}+q_{1}=p_{2}+q_{2}=...=p_{n}+q_{n}$ is an integer. 

It follows from the above that each normal surface meets the boundary of $\Delta_i$,
$i = 1, \ldots , n$, in $\epsilon (\alpha_i + \beta_i + \gamma_i)$.
Each normal surface with $\epsilon > 1$ is the sum of two other normal surfaces
in $\mathcal A_n$ and thus is not a vertex normal surface. On the other hand,
if $\epsilon = 1$ it can be shown that the only rational linear combinations 
$p_{i} + q_{i} = 1$, $i = 1, \ldots , n$, resulting in a normal surface of 
$\mathcal A_n$ must be $p_{i} = 1$ and $q_{i} = 0$ or $p_{i} = 0$ and $q_{i} = 1$.

Therefore, the projective
solution space has $2^{n}$ admissible vertices, corresponding to the
$2^{n}$ combinations of either $p_{i} = 1$ and $q_{i} = 0$ or $p_{i} = 0$ and $q_{i} = 1$ 
at each $\Delta_i$. The binomial coefficients easily follow from a simple Euler
characteristic argument since each surface $\mathbf{u}_i$ is a disc
and each $\mathbf{v}_i$ is
a punctured torus.
\end{proof}

\subsection*{Proof of Lemma \ref{lem:treeLemma}}

Here we give a full proof of Lemma~\ref{lem:treeLemma} from Section~\ref{sec:bdTrigs}, 
which we restate below. 

\setcounter{theorem}{1} 
\begin{lemma}
	Suppose $\mathcal{G}_0$ is a bounded triangulation with $n_0$ tetrahedra,
    $f_0$ is a boundary face of $\mathcal{G}_0$ such that not all
	vertices of $f_0$ are identified in $\mathcal{G}_0$, and
    $c_0$ is one of the three normal arc types on $f_0$.
    If there are $\alpha_0$ vertex normal surfaces in $\mathcal{G}_0$
    that meet $f_0$ in at least one arc of type $c_0$ but in no other normal arc
    types, then $\mathcal{G}_0$ can be extended to a family of
    triangulations $\{\mathcal{G}_k\}$ in which the number of
    vertex normal surfaces grows at a rate of $\Omega(\beta^n)$,
    where $\beta = \alpha_0^{1/(n_0+1)}$.
\end{lemma}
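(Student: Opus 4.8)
The plan is to make the recursive-squaring idea precise. Set $n_k=2n_{k-1}+1$, so $n_k=2^k(n_0+1)-1$, and build $\mathcal{G}_k$ from two copies $A$ and $B$ of $\mathcal{G}_{k-1}$ glued to one extra tetrahedron $\Delta_k$ along their respective faces $f_{k-1}$ (Figure~\ref{fig:attachingMapSimple}); iterating, $\mathcal{G}_k$ carries a binary tree of copies of $\mathcal{G}_0$. Let $\alpha_k$ be the number of vertex normal surfaces of $\mathcal{G}_k$ meeting a distinguished boundary face $f_k$ of $\Delta_k$ in at least one arc of a distinguished type $c_k$ and in no other arc type on $f_k$. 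I will show that each ordered pair consisting of one such surface in $A$ and one such surface in $B$ yields a distinct such surface of $\mathcal{G}_k$, giving $\alpha_k\ge\alpha_{k-1}^2$ and hence $\sigma(\mathcal{G}_k)\ge\alpha_k\ge\alpha_0^{2^k}=\alpha_0^{(n_k+1)/(n_0+1)}=\beta^{n_k+1}$ with $\beta=\alpha_0^{1/(n_0+1)}$, which is the asserted $\Omega(\beta^n)$ rate along $n=n_k$.

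First I fix the attachment. Writing $\Delta_k=k(0123)$, glue $f_{k-1}$ of $A$ to the face $k(012)$ and $f_{k-1}$ of $B$ to $k(123)$; since a normal arc type on a triangle is named by the vertex it cuts off, the two gluing maps may be chosen so that $c_{k-1}$ on $A$'s face becomes the arc type cutting off $0$ on $k(012)$ and $c_{k-1}$ on $B$'s face becomes the arc type cutting off $3$ on $k(123)$. The faces $f_k:=k(013)$ and $k(023)$ then remain on $\partial\mathcal{G}_k$, and I take $c_k$ to be the arc type cutting off $1$ on $f_k$. The key local observation is that the quadrilateral $Q\subset\Delta_k$ separating $\{0,3\}$ from $\{1,2\}$ meets $k(012),k(123),k(013),k(023)$ in arcs cutting off $0,3,1,2$ respectively. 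Hence, given good vertex normal surfaces $\mathbf{s}$ of $A$ (with $p\ge1$ arcs of type $c_{k-1}$ on its face) and $\mathbf{t}$ of $B$ (with $q\ge1$ such arcs), the vector $\mathbf{w}:=q\,\mathbf{s}+p\,\mathbf{t}+pq\,Q$ satisfies every matching equation of $\mathcal{G}_k$ — at the two now-internal faces the arc counts balance at $pq$ on both sides, with no other arc type present — and $\mathbf{w}$ meets $f_k$ in exactly $pq$ arcs, all of type $c_k$.

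The crux is to show that $\mathbf{w}$ spans an extremal ray of the solution cone of $\mathcal{G}_k$, so that a (unique, primitive) vertex normal surface $V(\mathbf{s},\mathbf{t})$ lies on it. Suppose $\mathbf{w}=\mathbf{w}_1+\mathbf{w}_2$ with both summands in the cone. Restricting to the coordinates of $A$, each $\mathbf{w}_i|_A$ lies in the solution cone of $\mathcal{G}_{k-1}$ and $\mathbf{w}_1|_A+\mathbf{w}_2|_A=q\,\mathbf{s}$; since $\mathbf{s}$ is a vertex normal surface of $\mathcal{G}_{k-1}$ this forces $\mathbf{w}_i|_A=\lambda_i\mathbf{s}$, and symmetrically $\mathbf{w}_i|_B=\mu_i\mathbf{t}$. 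Because $\lambda_i\mathbf{s}$ has no arc on $k(012)$ other than the one cutting off $0$ (and similarly for $B$), the matching equations at $k(012)$ and $k(123)$ allow $\mathbf{w}_i$ to use, among the normal pieces of $\Delta_k$, only the two triangles cutting off $0$ and $3$ and the quadrilateral $Q$; comparing with $\mathbf{w}|_{\Delta_k}=pq\,Q$ kills the triangle coefficients, and the matching equations then pin down $\mathbf{w}_i=(z_i/pq)\,\mathbf{w}$ for scalars $z_i\ge0$ with $z_1+z_2=pq$. Thus $\mathbf{w}$ is extremal, and passing to the primitive point only divides through by a common factor of $pq$, so $V(\mathbf{s},\mathbf{t})$ still restricts to positive multiples of $\mathbf{s}$ and of $\mathbf{t}$ and still meets $f_k$ in a positive number of arcs, all of type $c_k$. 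Distinct pairs differ already on the coordinates of $A$ or of $B$, so this produces $\alpha_{k-1}^2$ distinct good vertex normal surfaces of $\mathcal{G}_k$.

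The step I expect to be the real obstacle is purely topological: showing that every $\mathcal{G}_k$ is a triangulation of a $3$-manifold with boundary, and that $f_k$ again has the property that its three vertices are not all identified, so that the recursion can continue. This is exactly where the hypothesis on $f_0$ enters. I would argue by induction: gluing $\Delta_k$ along the two faces $k(012),k(123)$, which share the edge $12$, is a boundary-connected-sum type move joining $A$ and $B$ through a ball, and one must check that no edge of $\mathcal{G}_k$ becomes identified with itself in reverse and that every vertex link remains a ball or a sphere; the hypothesis that $f_{k-1}$ does not have all three vertices identified ensures that $f_{k-1}$ sits in $\partial\mathcal{G}_{k-1}$ nicely enough for this, and a short case check on the vertices of $\Delta_k$ transfers the property to $f_k$. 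The remaining bookkeeping — the tetrahedron count $n_k=2^k(n_0+1)-1$, non-emptiness of $\partial\mathcal{G}_k$ (it contains $k(023)$), and assembling the recursion into the stated growth rate — is routine.
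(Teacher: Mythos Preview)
Your proposal is correct and follows the same recursive-squaring strategy as the paper: build $\mathcal{G}_k$ from two copies of $\mathcal{G}_{k-1}$ plus one connecting tetrahedron, show $\alpha_k\ge\alpha_{k-1}^2$, and read off $\beta=\alpha_0^{1/(n_0+1)}$. Your extremality argument via $\mathbf{w}=\mathbf{w}_1+\mathbf{w}_2$ and restriction to $A$ and $B$ is clean and matches the paper's reasoning.

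One genuine but harmless difference: the paper glues the two copies so that both arcs $c_{k-1}$ sit at the \emph{same} vertex of $\Delta_k$ (an endpoint of the shared edge $e$) and extends through $\Delta_k$ using \emph{triangle} pieces, whereas you place the two arcs at opposite vertices $0$ and $3$ and extend with the \emph{quadrilateral} $Q_{03/12}$. Both choices make the matching equations force a unique extension, so the vertex-surface count goes through either way. Your variant has the pleasant feature that the connecting piece is a single quad type, which makes the ``only $t_0,t_3,Q$ survive'' step transparent.

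Where the paper invests more effort than you do is exactly the point you flag: verifying that each $\mathcal{G}_k$ is a bounded $3$-manifold and that $f_k$ again fails to have all three vertices identified. The paper does this by a case analysis on which pairs of vertices of $f_{k-1}$ are already identified in $\mathcal{G}_{k-1}$, and in some cases it deliberately orients the two copies asymmetrically to keep the vertex links as discs. Your sketch (``boundary-connected-sum type move'' plus ``short case check'') is the right idea, but note that with your gluing the vertices $1$ and $2$ of $\Delta_k$ each meet both copies, so when two vertices of $f_{k-1}$ are identified in $\mathcal{G}_{k-1}$ the link analysis needs the same kind of care the paper gives; you should expect to exploit the freedom in how $\{u,v\}\mapsto\{1,2\}$ on each side.
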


\begin{proof}
The proof consists of constructing such a family of triangulations. To do so, 
we take two copies of $\mathcal{G}_0$ and 
an additional tetrahedron $\Delta_1$ and join both copies along 
their faces $f_0$ to $\Delta_1$ (see Figure \ref{fig:attachingMap} where $k=1$),
yielding a new triangulation $\mathcal{G}_1$.
Any pair of the $\alpha_0$ vertex normal surfaces meeting
$f_0$ in a copy of normal arcs of type $c_0$ 
can be combined	such that this linear combination has a unique extension throughout $\Delta_1$ meeting 
a third face $f_1$ of $\Delta_1$ in only one of the three normal arcs
$c_1$. An argument exploiting the 
uniqueness of this extension shows that any such pairing yields a new vertex normal surface and there are 
$\alpha_1 = \alpha_0^2$ such pairings. The assumption that not all
vertices of the boundary face $f_0$ are identified
in $\mathcal{G}_0$ is necessary to show that the resulting triangulation
still represents a bounded manifold (i.e., all vertex links are discs or
spheres).

In a next step, two copies of $\mathcal{G}_1$ are combined together in an
analogous way to create $\mathcal{G}_2$, and so on. After
$k$ steps we have a full binary tree of depth $k$ where the ``leaves'' of the tree are all
copies of $\mathcal{G}_0$. In each step the number of tetrahedra is
doubled plus one extra ``root tetrahedron'' is added. On the
other hand, we at least square the number of vertex normal surfaces. These observations 
put together are sufficient to yield the desired result.
	\begin{figure}[tb]
		\begin{center}
		   \includegraphics[width=.5\textwidth]{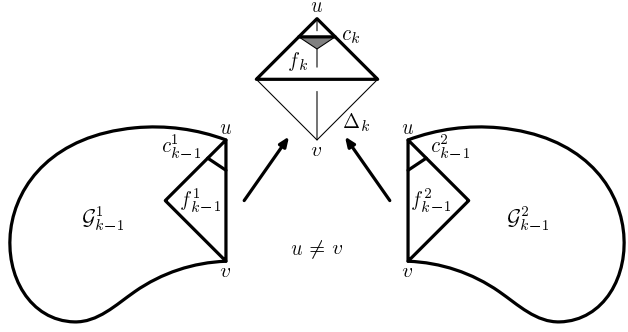}
		   \caption{Attaching two copies of $\mathcal{G}_{k-1}$ to the root tetrahedron $\Delta_k$. \label{fig:attachingMap}}
		\end{center}
	\end{figure}

	\medskip
    More precisely: let $\mathcal{G}_k$ be the triangulation after the
    $k$-th step with $n_k$ tetrahedra, with a designated boundary
	face $f_k$ with a fixed normal arc type $c_k$ of the root tetrahedron $\Delta_k$ which will be glued to the new root 
	tetrahedron $\Delta_{k+1}$, and with at least $\alpha_k$ vertex normal surfaces meeting $f_k$ only in copies of $c_k$
	(see Figure \ref{fig:attachingMap}).

	Then we have $n_{k+1} = 2 n_k +1$ and $\alpha_{k+1} \geq (\alpha_k)^2$. Now let $\beta$ be a real number
	such that $\alpha_0 \geq \beta^{n_0+1}$ and let us assume that $\alpha_k \geq \beta^{n_k+1}$. Since 
	$\alpha_{k+1} \geq (\alpha_k)^2$ it then follows that 
	\begin{eqnarray}
		\alpha_{k+1} &\geq& (\beta^{n_k+1})^2 \nonumber \\
		&=& \beta^{2n_k+2} \nonumber \\
		&=& \beta^{n_{k+1}+1} \nonumber
	\end{eqnarray}
	and thus $\mathcal{G}_{k+1}$ has at least $\beta^{n_{k+1}+1}$ vertex normal surfaces. To finish the induction argument we now simply
	choose $\beta = \alpha_0^{1/(n_0+1)}$.

	It remains to show that (i) $\mathcal{G}_k$ is a triangulation of a bounded $3$-manifold and (ii) any of the $\alpha_k$ 
	normal surfaces is in fact a vertex normal surface.

	To show that $\mathcal{G}_k$ represents a bounded $3$-manifold, we first have to take a closer look at how the two copies of
	$\mathcal{G}_{k-1}$ (denoted by $\mathcal{G}_{k-1}^1$ and $\mathcal{G}_{k-1}^2$) have to be attached to $\Delta_k$: the two boundary triangles of type 
	$f_{k-1}$ (denoted by $f_{k-1}^1$ and $f_{k-1}^2$) are joined to $\Delta_k$ along an edge $e$.
	The gluing has to be in a way that the endpoints of $e$ are not
    previously identified in \emph{both} $f_{k-1}^1$ and $f_{k-1}^2$,
	and that the normal arcs of
	type $c_{k-1}$ of both triangles (denoted by $c_{k-1}^1$ and $c_{k-1}^2$) are next to the same vertex of $\Delta_k$ which 
	thus has to be an endpoint of $e$ (see Figure \ref{fig:attachingMap}).
    Note that this is
	always possible if not all vertices of $f_{k-1}^1$ are identified
    together and likewise with $f_{k-1}^2$.

	Since $\mathcal{G}_0 = \mathcal{G}$ is a bounded triangulation, in order to prove that $\mathcal{G}_k$ is a bounded triangulation it suffices to assume that 
	$\mathcal{G}_{k-1}$ is a bounded manifold and then show that all vertex links of vertices of the root tetrahedron $\Delta_k$ are still 
	triangulations of the disc after attaching $\mathcal{G}_{k-1}^1$ and $\mathcal{G}_{k-1}^2$ to $\Delta_k$. 

	If none of the vertices of $f_{k-1}^i$ are identified in $\mathcal{G}_{k-1}^i$, $i=1,2$, none of the vertices of $\Delta_k$ 
	will be identified. 
	Thus, all vertex links are either just copies of the vertex links in $\mathcal{G}_{k-1}^1$ and $\mathcal{G}_{k-1}^2$ with an additional triangle attached or two 
	vertex linking discs attached to each other along an additional triangle (see Figure \ref{fig:DiscLinks} on the left). 

	If two of the vertices of $f_{k-1}^1$ and $f_{k-1}^2$ are identified,
    and if these are the vertices opposite the arcs
    $c_{k-1}^1$ and $c_{k-2}^2$, then
    one of these vertices in each face must be disjoint from $e$ in $\Delta_k$.
	In this case, three vertices of $\Delta_k$ are identified to the same vertex and the vertex link consists  
	of the two disjoint vertex links in $\mathcal{G}_{k-1}^1$ and
    $\mathcal{G}_{k-1}^2$ joined together along a triangle (the normal
    piece near the endpoint of $e$ opposite of $c_{k-1}^1$ and $c_{k-2}^2$), and two additional 
	disjoint triangles which are added at opposite ends of the new disc. The result is still a disc (see Figure \ref{fig:DiscLinks} on the right).
	The fourth vertex of $\Delta_k$ simply connects two discs as described in the previous situation.
	It follows that $\mathcal{G}_k$ must represent a bounded $3$-manifold,
    and the new face $f_k$ likewise does not have all three vertices
    identified.
	\begin{figure}[tb]
		\begin{center}
		   \includegraphics[width=.85\textwidth]{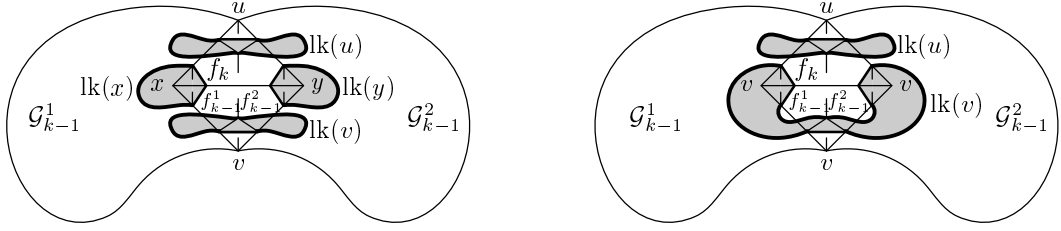}
		   \caption{Left: no two vertices of $f_{k-1}^1$ or $f_{k-1}^2$ are identified. Right: two vertices of $f_{k-1}^1$ and $f_{k-1}^2$ 
			are identified. $\operatorname{lk}(v)$ denotes the link of vertex $v$. \label{fig:DiscLinks}}
		\end{center}
	\end{figure}

	If two of the vertices of $f_{k-1}^1$ and $f_{k-1}^2$ are identified,
    and if these include the vertices inside arcs $c_{k-1}^1$ and $c_{k-2}^2$,
    then we must be careful how we attach the two copies of
    $\mathcal{G}_{k-1}$ to $\Delta_k$: in one copy $\mathcal{G}_{k-1}^1$
    we ensure that the two identified vertices join to the common edge $e$,
    and in the other copy $\mathcal{G}_{k-1}^2$ we only allow one of
    these vertices to join to $e$.  By a similar argument, this ensures
    that the new vertex links in $\mathcal{G}_k$ are discs, and that the
    new boundary face $f_k$ again does not have all three vertices
    identified.

	\medskip
	To prove that all $\alpha_{k}$ normal surfaces constructed in the
    $k$-th step are in fact vertex normal surfaces, consider
	two normal surfaces $\mathbf{u}_1$ and $\mathbf{u}_2$ meeting $f_{k-1}$ in exactly $x_1$ and $x_2$ normal arcs of type $c_{k-1}$. 
	Let $m$ be the smallest common multiple of $x_1$ and $x_2$, then
	$\frac{m}{x_1} \, \mathbf{u}_1 + \frac{m}{x_2} \, \mathbf{u}_2$ can be extended to a normal surface in $\mathcal{G}_k$ by inserting $m$ matching triangles into $\Delta_k$
	(cf. Figure \ref{fig:attachingMap}). 
	We will denote this normal surface by $\mathbf{u}$. Now assume that $\mathbf{u}$ can be written as $\mathbf{u} = \lambda \mathbf{s} + \mu \mathbf{t}$ for 
	two normal surfaces $\mathbf{s}$ and $\mathbf{t}$ of $\mathcal{G}_k$ and 
	two rational numbers $\lambda$ and $\mu$. By construction, the restriction of this linear combination to $\mathcal{G}_{k-1}^1$ or $\mathcal{G}_{k-1}^2$ 
	is a multiple of a vertex normal surface. Hence, inside $\mathcal{G}_{k-1}^1$ or $\mathcal{G}_{k-1}^2$ both $\mathbf{s}$ and 
	$\mathbf{t}$ are rational multiples of $\mathbf{u}$. Moreover, since $\mathbf{u}$ inside $\Delta_k$
	only consists of $m$ triangles of a given type, $\mathbf{s}$ and $\mathbf{t}$ have to be multiples of $\mathbf{u}$ inside $\Delta_k$ as well and by the matching equations
	for $\mathbf{s}$ and $\mathbf{t}$ at $f_{k-1}^1$ and $f_{k-1}^2$ it follows that $\mathbf{s}$ and $\mathbf{t}$ are multiples of $\mathbf{u}$ 
	throughout the entire triangulation $\mathcal{G}_k$. Thus, 
	$\mathbf{u}$ is a vertex normal surface of $\mathcal{G}_k$.
\end{proof}

\subsection*{Construction of triangulation $\mathcal{G}$}

In Section~\ref{sec:bdTrigs} we refer to an $11$-tetrahedra triangulation
$\mathcal{G}$ as a starting point for the family $\{ \mathcal{G}_k \}$ of 
bounded triangulations in Corollary \ref{cor:bdrygrowth}. Here we present this 11-tetrahedron
triangulation in full.

\medskip
The triangulation $\mathcal{G}$, consisting of $n=11$ tetrahedra $\Delta_i = i(0123)$, 
$i = 0 , \ldots , 10$, is given by the gluings in Table~\ref{tab:g},
where missing entries denote boundary faces. Face $f_0$ is given by $f_0 = 0(012)$ and the normal 
arc type $c_0$ is the one around vertex $0$ of $f_0$, hence $c_0 = 0$.
Using \texttt{Regina} \cite{burton12-regina,regina} for vertex 
normal surface enumeration yields $\sigma (\mathcal{G}) = 61\,526$ and $\alpha_0 = 31\,643$. 
The face pairing graph of $\mathcal{G}$ is shown in Figure~\ref{fig:face_pairing_graphs}.

\begin{table}[tb]
\[\begin{array}{c|rrrr}
	\Delta_i & i(012) & i(013) & i(023) & i(123) \\
	\hline
	0&&1(012)&2(021)& \\
	1&0(013)&3(012)&4(021)& \\
	2&0(032)&5(012)&6(021)& \\
	3&1(013)&7(231)&7(023)& \\
	4&1(032)&8(231)&8(023)& \\
	5&2(013)&9(231)&9(023)& \\
	6&2(032)&10(231)&10(023)& \\
	7&&&3(023)&3(301) \\
	8&&&4(023)&4(301) \\
	9&&&5(023)&5(301) \\
	10&&&6(023)&6(301) 
\end{array}\]
\caption{The triangulation $\mathcal{G}$}
\label{tab:g}
\end{table}

\subsection*{Proof of Theorem \ref{thm:pathFamily}}

Here we give a full proof of Theorem~\ref{thm:pathFamily} from Section~\ref{sec:bdTrigs}, 
which we restate below. 

\setcounter{theorem}{3} 
\begin{theorem}
For each $n \ge 1$, $\mathcal P_{n}$ has $2^{n+1}+\frac{(n+1)(n+2)}{2}
\in \Theta(2^n)$ vertex normal discs.
\end{theorem}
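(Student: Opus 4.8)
The plan is to read off the matching equations of $\mathcal P_n$ and then enumerate its vertex normal surfaces by a recursion on $n$. Since the only internal faces of $\mathcal P_n$ are the $n-1$ faces $i(012)=(i+1)(013)$, there are exactly $3(n-1)$ matching equations, and the three equations across the face between $\Delta_i$ and $\Delta_{i+1}$ involve only the seven normal coordinates of $\Delta_i$ and of $\Delta_{i+1}$. Writing the triangle coordinates of $\Delta_i$ as $t_i^0,\dots,t_i^3$ and its three quadrilateral coordinates as $q_i^{01},q_i^{02},q_i^{03}$, these equations equate, corner by corner, the number of normal arcs the surface makes on that shared face; this gives the system a banded, ``transfer'' structure, so a candidate solution can be propagated along the path $\Delta_1,\dots,\Delta_n$.

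First I would solve the banded system together with the quadrilateral constraints (at most one quad type per tetrahedron). Because the coupling across each shared face is so loose, a vertex normal surface is essentially rigid tetrahedron by tetrahedron, and the extreme rays should organise into a small number of families indexed by combinatorial data along the path: (a) the $n+3$ vertex links, which one checks are precisely the extreme rays of the face where all quadrilateral coordinates vanish; (b) a ``localised'' family of surfaces carrying a single quadrilateral, whose triangular tails extend a bounded distance along the path in each direction, contributing the polynomial term; and (c) a ``spanning'' family in which each tetrahedron is assigned either a connecting triangle or a quadrilateral of a forced type --- exactly the ``for each normal arc, add a triangle or a quadrilateral'' dichotomy of the sketch --- which contributes the $2^n$-type term. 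Translating these choices into counts yields a small system of linear recurrences in $n$ for refined counts (surfaces classified by how they meet the frontier face of $\Delta_n$), and solving it gives the total number $D_n$ of vertex normal discs in closed form. Matching the initial value $D_1=7$ (a single tetrahedron has $4$ vertex-linking triangles and $3$ quadrilaterals, all discs) pins the solution down to $D_n=2^{n+1}+\tfrac{(n+1)(n+2)}{2}$, and the $2^n$ term gives the stated $\Theta(2^n)$ growth.

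To separate discs from the other vertex normal surfaces I would compute the Euler characteristic of each enumerated surface directly from its triangle and quadrilateral multiplicities, using that each normal triangle (respectively quadrilateral) is a $3$-cell (respectively $4$-cell) glued to the others along the normal arcs and edge intersections; this identifies which rays are discs and confirms that the count is as claimed, and as a byproduct that $\mathcal P_n$ triangulates the $3$-ball. The main obstacle will be completeness of the enumeration: showing that the families (a)--(c) exhaust the extreme rays of the projective solution space. Concretely, one must show that any admissible coordinate vector not of one of these forms can be written as a sum of two non-proportional admissible vectors --- i.e.\ is not minimal --- and this is where the bookkeeping over triangle multiplicities and the per-tetrahedron quadrilateral choice becomes delicate and must be carried out carefully along the whole path.
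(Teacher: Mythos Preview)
Your overall strategy---classify vertex normal surfaces by how they meet the frontier face $f_n$, set up linear recurrences along the path, and verify via Euler characteristic that everything is a disc---is exactly the approach the paper takes, and your identification of the completeness obstacle (showing that anything not on the list decomposes as a sum) is the right one.

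However, your intuitive decomposition into families (a), (b), (c) is inaccurate and will mislead you if you try to make it precise. In particular, claim (b) is wrong: the polynomial term does \emph{not} come from surfaces carrying a single quadrilateral with bounded triangular tails. Many of the polynomial-count surfaces contain arbitrarily long runs of quadrilaterals. For instance, one family consists of a single triangle $t_2$ in some $\Delta_j$ followed by the quadrilateral separating edge $01$ from edge $23$ in each of $\Delta_{j+1},\ldots,\Delta_n$; another family caps such a chain with a further triangle $t_3$ at some later $\Delta_k$. Your $n+3$ vertex links in (a) are correct as a count of vertices, but they do not sit apart from the rest of the enumeration---they are already scattered among the families that arise from the recursion.

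The structural fact that makes the paper's argument clean is this: every vertex normal surface of $\mathcal{P}_n$ meets each internal face $f_i$ in \emph{at most one} normal arc, and hence contains at most one normal piece in each tetrahedron (any surface meeting some $f_i$ in two or more arcs splits as a sum). This yields a four-way classification by which arc type, if any, appears on $f_n$: the paper calls the resulting counts $\alpha_n,\beta_n,\gamma_n,\delta_n$, obtains the recurrences $\alpha_{i+1}=\alpha_i+\beta_i$, $\beta_{i+1}=\beta_i+\alpha_i$, $\gamma_{i+1}=\gamma_i+1$, $\delta_{i+1}=\delta_i+\gamma_i$ with initial values $2,2,2,1$, and reads off $\alpha_n=\beta_n=2^n$, $\gamma_n=n+1$, $\delta_n=\tfrac{n(n+1)}{2}$. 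Once you replace your (a)--(c) picture by this ``one piece per tetrahedron, classify by the arc on $f_n$'' picture, your plan becomes the paper's proof.
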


\begin{proof}
First of all, let $\mathcal P_{i}$ be the triangulation after the $i$-th
step with $i$ tetrahedra with a designated boundary face $f_i$ of
$\Delta_i$, and with
fixed normal arcs $a_i$, $b_i$ and $c_i$ on this face which will be
glued to tetrahedron $\Delta_{i+1}$, as shown in
Figure~\ref{fig:pathFamily}.
Also let $\alpha_i$, $\beta_i$ and $\gamma_i$ be the number of vertex normal surfaces meeting face $f_i$ in $a_i$, $b_i$ and $c_i$ respectively. Finally, let $\delta_i$ be the number of vertex normal surfaces not meeting face $f_i$.

	\begin{figure}[tb]
		\begin{center}
		   \includegraphics[width=.45\textwidth]{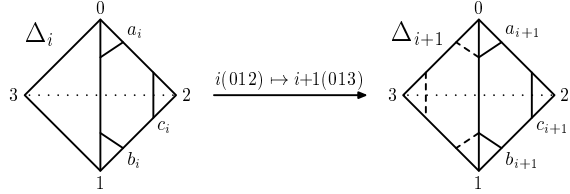}
		   \caption{Notation for the normal arcs of the path family. \label{fig:pathFamily}}
		\end{center}
	\end{figure}

First, let us consider the case of a single tetrahedron $\Delta_{1}$. There are seven admissible vertices in the projective solution space of $\Delta_{1}$ corresponding to seven vertex normal discs: four single triangles around each vertex and three single quadrilaterals. There are two surfaces meeting at each type of normal arc in face $f_1$ and one surface without any normal arcs in this face. Therefore $\alpha_1=2$, $\beta_1=2$, $\gamma_1=2$ and $\delta_1=1$.

Now, let us construct explicit formulae for each of the sequences
$\alpha_i$, $\beta_i$, $\gamma_i$ and $\delta_i$. In tetrahedron
$\Delta_{i+1} = (i+1)(0123)$, normal surfaces with normal arcs $a_{i+1}$ either have a triangle around vertex $0$ or a quadrilateral separating edge $03$ from edge $12$, where the notation is as indicated in Figure \ref{fig:pathFamily}. The triangle meets face $f_{i}$ in normal arc $a_{i}$ and the quadrilateral meets $f_{i}$ in normal arc $b_{i}$. Therefore the number of normal surfaces $\alpha_{i+1}$ is exactly $\alpha_{i} + \beta_{i}$. A similar argument shows that $\beta_{i+1} = \beta_{i} + \alpha_{i}$, and since $\alpha_1=\beta_1=2$, it follows that $\alpha_i=\beta_i=2^i$.

Normal surfaces with normal arcs $c_{i+1}$ on the other hand either have a triangle around vertex $2$ or a quadrilateral separating edge $01$ from edge $23$ (see Figure \ref{fig:pathFamily}). The triangle does not meet $f_{i}$ and the quadrilateral meets $f_{i}$ in normal arc $c_{i}$. Therefore the number of normal surfaces $\gamma_{i+1} = \gamma_{i} + 1$. Therefore since $\gamma_1=2$ we have $\gamma_i=i+1$.

Now consider $\delta_{i+1}$, the number of normal surfaces not meeting face $f_{i+1}$. These surfaces either have a triangle around vertex $3$ or do not meet face $f_{i}$ (see Figure \ref{fig:pathFamily}). Since there are exactly $\delta_i$ normal surfaces disjoint to $f_{i}$ and the triangle around vertex $3$ meets face $f_{i}$ in a normal arc of type $c_{i}$, we have $\delta_{i+1} = \gamma_{i}+\delta_{i}$. It follows that $\delta_i=\frac{i(i+1)}{2}$, since $\delta_1=1$ and $\gamma_i=i+1$.

We note at this point that all vertex normal surfaces in $\mathcal{P}_n$
have been accounted for,
since any normal surface that meets the face $f_{i+1}$ in
more than one arc can be expressed as a sum of the surfaces described above.
Therefore the total number of vertex normal surfaces in $\mathcal P_{n}$ is 

$$\alpha_n+\beta_n+\gamma_n+\delta_n=2^n+2^n+(n+1)+\frac{n(n+1)}{2}=2^{n+1}+\frac{(n+1)(n+2)}{2} .$$ 

It remains to show that each of the above vertex normal surfaces is a disc. At the $(i+1)$-st step, there is only one triangle or one quadrilateral in $\Delta_{i+1}$ glued to one normal arc in $f_{i}$. Therefore, we either add one vertex, two edges and one face in the case of the triangle or two vertices, three edges and one face in the case of the quadrilateral to the surfaces in $\mathcal P_{i}$. In both cases the Euler characteristic does not change in the $(i+1)$-st step. Since in tetrahedron $\Delta_1$ all seven surfaces are discs, it follows that every surface in $\mathcal P_{n}$ is a disc.
\end{proof}

\subsection*{Proof of Theorem \ref{thm:boundedLargeCoords}}

In the following, we give a full proof of Theorem~\ref{thm:boundedLargeCoords} 
from Section~\ref{sec:largeCoords}, which we restate below. 

\begin{theorem}
	There is a family $\mathcal{B}_{n}$ of bounded $1$-vertex
    triangulations with $n$ tetrahedra,
    where $\mathcal{B}_n$ contains a vertex normal disc $\mathbf{d}_{n}$ with
    maximum coordinate $\operatorname{F}_{n+1}$, 
	where $\operatorname{F}_k$ denotes the $k$-th Fibonacci number.
\end{theorem}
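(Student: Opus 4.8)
The plan is to build $\mathcal{B}_n$ explicitly as the standard layered solid torus triangulation $\operatorname{LST}(\operatorname{F}_{n+1},\operatorname{F}_{n+2},\operatorname{F}_{n+3})$ and to track the meridian disc through the layering process, showing that its normal coordinates are governed by the Fibonacci recursion. First I would recall the inductive construction of a layered solid torus: one starts from a one-tetrahedron triangulation of a solid torus whose boundary torus has three edges with intersection numbers $(1,2,3)$ with the meridian disc (this is $\operatorname{LST}(1,2,3)$, as in Figure~\ref{fig:lst123}), and at each subsequent step one glues a new tetrahedron onto two of the three boundary faces (a ``layering''), which replaces the boundary triangulation by a new one whose three edge-intersection numbers are $(b,a+b,a+2b)$ — i.e., each layering performs one step of the Stern–Brocot / Euclidean-style recursion on the triple. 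Choosing the layerings so that at stage $n$ the triple is $(\operatorname{F}_{n+1},\operatorname{F}_{n+2},\operatorname{F}_{n+3})$ requires exactly that each step add the two smaller numbers — the slowest possible growth — and $n$ tetrahedra produce the triple starting from $\operatorname{F}_2=1,\operatorname{F}_3=2,\operatorname{F}_4=3$ at $n=1$; an easy induction on the Fibonacci identity $\operatorname{F}_{k}+\operatorname{F}_{k+1}=\operatorname{F}_{k+2}$ confirms the bookkeeping, and since all vertices of the one-tetrahedron seed are identified and layering never introduces new vertices, $\mathcal{B}_n$ is a $1$-vertex triangulation.

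Next I would identify the meridian disc $\mathbf{d}_n$ and compute its normal coordinates. In $\operatorname{LST}(1,2,3)$ the meridian disc is the hexagonal normal disc shown on the right of Figure~\ref{fig:lst123}, with one quadrilateral, two triangles and maximum edge-intersection $3=\operatorname{F}_4$. The key observation is that a meridian disc is unique up to normal isotopy, so after each layering the meridian disc of the larger solid torus is forced: it meets the new tetrahedron in whatever normal pieces are needed to match the (already determined) disc of the previous stage along the two glued faces, plus possibly one new quadrilateral. I would show that its coordinate vector evolves by: the new maximum coordinate equals the sum of the two largest previous coordinates, mirroring the boundary-edge recursion, so by induction the maximum coordinate of $\mathbf{d}_n$ is $\operatorname{F}_{n+3}$ on the boundary — wait, more carefully, the stated bound is $\operatorname{F}_{n+1}$, so I would track the interior quadrilateral/triangle counts (which lag the boundary intersection numbers by a couple of Fibonacci indices) and verify the maximum normal coordinate of $\mathbf{d}_n$ is precisely $\operatorname{F}_{n+1}$; this is a routine but careful induction comparing normal-piece multiplicities against the $(a,b,a+b)$ parameters.

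Finally I would argue that $\mathbf{d}_n$ is genuinely a \emph{vertex} normal surface, not merely a normal surface with large coordinates. Here I would use that the meridian disc is the unique normal disc in a layered solid torus and is incompressible and boundary-incompressible, and invoke the standard fact (from the theory of $0$-efficient / layered triangulations, e.g.\ \cite{jaco03-0-efficiency}) that an incompressible, $\partial$-incompressible normal surface that is unique in its class must lie on an extremal ray of the projective solution space; together with minimality of its coordinate vector among integer points on that ray, this makes it a vertex normal surface. Alternatively, and perhaps more cleanly, I would note the projective solution space of $\mathcal{B}_n$ is low-dimensional and exhibit $\mathbf{d}_n$ directly as a vertex by checking it cannot be written as a sum of two nontrivial normal surfaces (any such splitting would have to respect the quadrilateral constraints tetrahedron-by-tetrahedron, and the layered structure forces each summand to be a multiple of $\mathbf{d}_n$, exactly as in the argument used in the proof of Lemma~\ref{lem:treeLemma}). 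The main obstacle is this last step: one must rule out all alternative decompositions of $\mathbf{d}_n$, which requires understanding the full vertex structure of the projective solution space of a layered solid torus rather than just the meridian disc in isolation; I expect this to be where the detailed case analysis in the appendix concentrates its effort.
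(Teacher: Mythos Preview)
Your proposal is essentially the paper's approach: build $\mathcal{B}_n$ as the layered solid torus $\operatorname{LST}(\operatorname{F}_{n+1},\operatorname{F}_{n+2},\operatorname{F}_{n+3})$, take $\mathbf{d}_n$ to be the meridian disc, and prove both the coordinate bound and vertex-normality by induction on the layering. Two small clarifications where you hedge. First, the index bookkeeping: in the paper's induction, layering over the edge with $\operatorname{F}_{n+1}$ intersections forces the new tetrahedron to carry $\operatorname{F}_{n+2}$ triangles of each of two types and $\operatorname{F}_{n+1}$ quadrilaterals of one type, so the maximum \emph{normal coordinate} (not the maximum boundary-edge intersection) of $\mathbf{d}_{n+1}$ is $\operatorname{F}_{n+2}$, matching the statement. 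Second, for vertex-normality the paper uses exactly your ``alternative'' route: if $\mathbf{d}_{n+1}=\lambda\mathbf{s}+\mu\mathbf{t}$, restrict to the first $n$ tetrahedra to get a decomposition of $\mathbf{d}_n$, which is extremal by induction, so $\mathbf{s},\mathbf{t}$ are proportional to $\mathbf{d}_n$ there; then one writes out the six matching equations at the two newly-glued faces and solves them explicitly (together with the quadrilateral constraint) to see the extension into the new tetrahedron is unique. This is a short linear-algebra computation, not a case analysis. Your first suggested route---inferring extremality from incompressibility and $\partial$-incompressibility---is not a standard theorem and the paper does not use it; I would drop that option.
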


\begin{proof}
	The family of triangulations $\mathcal{B}_{n}$ consists of layered solid tori of type
	$\operatorname{LST}
    (\operatorname{F}_{n+1},\operatorname{F}_{n+2},\operatorname{F}_{n+3})$
    and each vertex 
	normal surface $\mathbf{d}_{n}$ will be the
    corresponding meridian disc with algebraic complexity of at least $\operatorname{F}_{n+1}$.

	\medskip
	In order to prove this we will start with the base case $\mathcal{B}_{1} = \operatorname{LST}(1,2,3)$. Figure \ref{fig:lst123}
	shows a one tetrahedron triangulation of $\operatorname{LST}(1,2,3)$ with the meridian disc having maximum coordinate $\operatorname{F}_{2} = 1$. 
	Furthermore, by using \texttt{Regina} 
	\cite{burton12-regina,regina} we can check that the meridian disc $\mathcal{B}_{n}$ is a vertex normal surface of $\mathcal{B}_{n}$.

	Now, let us assume that $\mathcal{B}_{n}$ is a $n$-tetrahedra triangulation of 
	$\operatorname{LST} (\operatorname{F}_{n+1},\operatorname{F}_{n+2},\operatorname{F}_{n+3})$
	containing the meridian disc $\mathbf{d}_{n}$ as a vertex normal surface with maximum
	coordinate $\operatorname{F}_{n+1}$, and intersecting the boundary edges in $\operatorname{F}_{n+1}$, 
	$\operatorname{F}_{n+2}$ and $\operatorname{F}_{n+3}$ points (see Figure \ref{fig:largeLST} on the left). 
	\begin{figure}[tb]
    		\begin{center}
        	    \includegraphics[width=.65\textwidth]{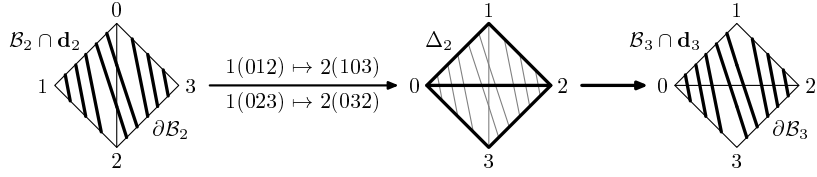}
	 	   \caption{Building $\mathcal{B}_{n}$ in the case $n=2$. Left: boundary of $\mathbf{d}_{2}$ in $\partial \mathcal{B}_{2}$. 
				Center: the $3$rd tetrahedron is glued to $\mathcal{B}_{2}$ Right: boundary of $\mathbf{d}_{3}$ in 
				$\partial \mathcal{B}_{3}$. \label{fig:largeLST}}
     		\end{center}
	\end{figure}

	To construct $\mathcal{B}_{n+1}$ and $\mathbf{d}_{n+1}$ we now glue the $(n+1)$-st tetrahedron to 
	$\operatorname{LST} (\operatorname{F}_{n+1},\operatorname{F}_{n+2},\operatorname{F}_{n+3})$ such that the
	boundary edge with $\operatorname{F}_{n+1}$ intersecting points becomes an internal edge (see Figure \ref{fig:largeLST} in
 	the center). Note that $\mathcal{B}_{n+1}$ still triangulates a solid torus. 
	There are $\operatorname{F}_{n+2}$ parallel normal arcs both in the upper left and in the lower right corner
	(in each case $\operatorname{F}_{n+1}$ arcs coming from triangles and $\operatorname{F}_n$ arcs 
	coming from quadrilaterals of $\mathbf{d}_{n}$).
	For each of them we will insert a normal triangle into the $(n+1)$-st tetrahedron of $\mathcal{B}_{n+1}$, intersecting 
	the boundary of $\mathcal{B}_{n+1}$ in two arcs and the
	newly inserted boundary edge $e$ in one point. For each of the remaining $\operatorname{F}_{n+1}$ pairs of arcs in the middle 
	of the boundary of $\mathcal{B}_{n}$ we insert $\operatorname{F}_{n+1}$ parallel normal quadrilaterals into the $(n+1)$-st tetrahedron of $\mathcal{B}_{n+1}$,
	each intersecting the boundary of $\mathcal{B}_{n+1}$ in two normal arcs and edge $e$ in one point.

	Altogether, we extend the meridian disc of $\mathcal{B}_{n}$ to a new normal surface by adding $2 \operatorname{F}_{n+2}$ triangles and 
	$\operatorname{F}_{n+1}$ quadrilaterals, hence $\operatorname{F}_{n+4}$
    normal pieces each intersecting $e$ in one point. The new normal surface  
	has two types of normal triangles with each $\operatorname{F}_{n+2}$ copies, and thus algebraic complexity of at least $\operatorname{F}_{n+2}$. 
	Starting from $\mathbf{d}_{n}$, each of these extensions applied one
	after another turns the previous disc into a new disc. As a consequence, the normal
	surface constructed by attaching all $\operatorname{F}_{n+4}$ normal
    pieces to $\mathbf{d}_{n}$ is still a disc and will be denoted by
	$\mathbf{d}_{n+1}$. The boundary curve of $\mathbf{d}_{n+1}$ is still simply closed
	and non-contractible in the boundary of $\mathcal{B}_{n+1}$, hence, it is the meridian disc of $\mathcal{B}_{n+1}$ intersecting the
	boundary edges in $\operatorname{F}_{n+2}$, $\operatorname{F}_{n+3}$ 
	and $\operatorname{F}_{n+4}$ points.
    By construction, $\mathcal{B}_{n+1}$ is a layered solid torus of type
	$\operatorname{LST} (\operatorname{F}_{n+2},\operatorname{F}_{n+3},\operatorname{F}_{n+4})$ with $n+1$ tetrahedra 
	(see Figure \ref{fig:largeLST} on the right).

	It remains to show that $\mathbf{d}_{n+1}$ is a vertex normal surface. This is equivalent to the statement that
	for any rational combination 
	\begin{equation}
		\label{eq:rationalComb}
		\mathbf{d}_{n+1} = \lambda \mathbf{s} + \mu \mathbf{t}
	\end{equation}
	with $\mathbf{s}$ and $\mathbf{t}$ normal surfaces in $\mathcal{B}_{n+1}$ and $\lambda, \mu \in \mathbb{Q}$, the normal surfaces $\mathbf{s}$ 
	and $\mathbf{t}$ must be rational multiples of $\mathbf{d}_{n+1}$.

	Equation \ref{eq:rationalComb} is valid for every subset of normal coordinates in $\mathcal{B}_{n+1}$. In particular,
	it holds if we restrict $\mathbf{s}$, $\mathbf{t}$ and $\mathbf{d}_{n+1}$ to the normal coordinates associated to the first $n$ tetrahedra of
	$\mathcal{B}_{n+1}$. However, this restriction applied to $\mathbf{d}_{n+1}$ yields $\mathbf{d}_{n}$ which is a vertex normal surface by 
	assumption. It follows that $\mathbf{s} = \theta \mathbf{d}_{n}$ and $\mathbf{t} = \psi \mathbf{d}_{n}$ in $\mathcal{B}_{n}$ for some rational numbers
	$\theta$ and $\psi$.

	Extending $\mathcal{B}_{n}$ to $\mathcal{B}_{n+1}$, we glue the $(n+1)$-st tetrahedron onto the two boundary triangles 
	of $\mathcal{B}_{n}$, thus adding seven new variables and six new matching constraints to the projective solution space of $\mathcal{B}_{n}$
	in order to enumerate the vertex normal surfaces of $\mathcal{B}_{n+1}$. 
	Following the labelling of Figure \ref{fig:largeLST} on the right, we will denote the seven normal coordinates
	of the $(n+1)$-st tetrahedron by
	$$ (t_0, t_1, t_2, t_3 \,|\, q_{01}, q_{02}, q_{03} ) $$
	where $t_X$ denotes the triangle type isolating vertex $X$ from the rest of the tetrahedron and $q_{XY}$ 
	denotes the quadrilateral type isolating edge $XY$. Taking the normal coordinates from the boundary curve of
	$\mathbf{d}_{n}$ into account, this results in the following equations
	\begin{eqnarray}
		0 &=& t_2 + q_{03} \nonumber \\
		0 &=& t_3 + q_{03} \nonumber \\
		\operatorname{F}_{n+2} &=& t_0 + q_{01} \nonumber \\
		\operatorname{F}_{n+2} &=& t_1 + q_{01} \nonumber \\
		\operatorname{F}_{n+1} &=& t_2 + q_{02} \nonumber \\
		\operatorname{F}_{n+1} &=& t_3 + q_{02} . \nonumber 
	\end{eqnarray}

	It follows immediately that $t_2 = t_3 = q_{03} = 0$, and $q_{02} = \operatorname{F}_{n+1}$, $q_{01}=0$ by the quadrilateral
	constraints and thus $t_0 = t_1 = \operatorname{F}_{n+2}$. Hence, the matching equations assure that $\mathbf{d}_{n}$ has a unique
	extension and the same holds for the restricted versions of $\mathbf{s}$ and $\mathbf{t}$ for some rational multiple of the matching
	equations. It follows that $\mathbf{s}$ and $\mathbf{t}$ are rational multiples of $\mathbf{d}_{n+1}$ and $\mathbf{d}_{n+1}$ is a vertex normal surface.  
\end{proof}

\subsection*{The triangulation $\mathcal{E}$}

In Section~\ref{sec:largeCoords} we refer to a $4$-tetrahedron triangulation
$\mathcal{E}$ acting as a kind of ``plug'' for the bounded family $\mathcal{B}_n$
in order to obtain the closed family $\mathcal{C}_n$ with large algebraic
complexity. Here we present this 4-tetrahedron
triangulation including its vertex normal surfaces $\mathbf{s}$ and
$\mathbf{t}$ in detail.

\medskip
The triangulation $\mathcal{E}$ is given by the
$4$ tetrahedra $\Delta_i = i (0123)$, $i = 0 , \ldots , 3$, and the
following table of gluings:
$$
\begin{array}{c|cccc}
	\Delta_i & i(012) & i(013) & i(023) & i(123) \\
	\hline
	0&2(231)&1(230)&2(023)&1(123) \\
	1&3(012)&2(102)&0(301)&0(123) \\
	2&1(103)&3(230)&0(023)&0(201) \\
	3&1(012)&\partial_1&2(301)&\partial_2 
\end{array}
$$
where $\partial_1 = 3(013)$ and $\partial_2 = 3(123)$ denote the two boundary faces of $\mathcal{E}$ (see Figure \ref{fig:sAndtApp}).
The face pairing graph of $\mathcal{E}$ is shown in Figure \ref{fig:face_pairing_graphs}.

Enumeration of all vertex normal surfaces of $\mathcal{E}$ using \texttt{Regina} \cite{burton12-regina,regina} 
yields $13$ surfaces, $12$ of them bounded and one of them closed. However, in the following we 
will only take a closer look at two of the bounded surfaces which will be denoted by $\mathbf{s}$ 
and $\mathbf{t}$. Surface $\mathbf{s}$ is a cylinder with two boundary components
(see Figure \ref{fig:sAndtApp} on the left), $\mathbf{t}$ is a M\"obius strip with one boundary 
component (see Figure \ref{fig:sAndtApp} on the right).
In particular, both surfaces have Euler characteristic $0$.
\begin{figure}[tb]
	\begin{center}
       	    \includegraphics[width=.3\textwidth]{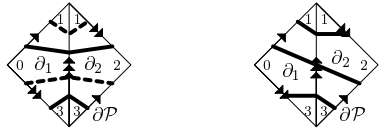}
 	   \caption{Left: boundary pattern of $\mathbf{s}$, two disjoint circles. Right: boundary pattern of $\mathbf{t}$ \label{fig:sAndtApp}}
	\end{center}
\end{figure}

The surfaces are given by the following standard normal coordinates (cf. proof of Theorem \ref{thm:boundedLargeCoords} for
more about the notation of standard normal coordinates):
$$
\begin{array}{lcllll}
	\mathbf{s} &=&(0,1,1,0\,|\,0,0,0),&(0,1,0,0\,|\,0,1,0),&(0,1,0,0\,|\,0,0,1),&(0,2,0,2\,|\,0,0,0) \\
	\mathbf{t} &=&(0,0,0,0\,|\,1,0,0),&(1,1,0,0\,|\,0,0,0),&(1,1,0,0\,|\,0,0,0),&(0,1,0,1\,|\,0,0,1),\\
\end{array}
$$
and boundary patterns
$$
\begin{array}{lcll}
	\partial \mathbf{s} &=&(0,2,2),&(2,0,2) \\
	\partial \mathbf{t} &=&(0,2,1),&(1,0,2),\\
\end{array}
$$
where the first triple counts the normal arcs of boundary face $\partial_1$ and the second triple the ones of boundary face 
$\partial_2$. The triples are ordered starting with the normal arcs around the smallest vertex label.

The surfaces $\mathbf{s}$ and $\mathbf{t}$ are compatible, meaning that their normal coordinates can be added to each other yielding a new normal
surface. Since the Euler characteristic is additive under this summing
operation it follows that all linear combinations of $\mathbf{s}$ and $\mathbf{t}$ will have Euler characteristic $0$. Moreover, any linear combination
of $\mathbf{s}$ and $\mathbf{t}$ is connected if and only if it is orientable with two boundary components or 
non-orientable with one boundary component. There cannot be any interior closed connected components 
since the only closed vertex normal surface is not compatible with $\mathbf{t}$ and a scalar multiple of $\mathbf{s}$ cannot contain
closed connected components. Hence, any connected linear combination of $\mathbf{s}$ and $\mathbf{t}$ is a cylinder or a M\"obius strip where the
latter is true if and only if the linear combination has an odd number of copies of $\mathbf{t}$.

\medskip
Keeping this in mind, simple addition of the boundary patterns of $\mathbf{s}$ and $\mathbf{t}$ shows that 
$$\operatorname{F}_{n-4} \, \mathbf{t} + \frac{1}{2}\operatorname{F}_{n-5} \, \mathbf{s},$$ 
$n \geq 5$, yields the boundary pattern of the meridian disc $\mathbf{d}_{n-4}$ from the layered solid torus family $\mathcal{B}_{n-4}$ from Theorem \ref{thm:boundedLargeCoords} above. Moreover, since the boundary patterns of $\mathbf{s}$ and $\mathbf{t}$ are not multiples of each other, this is the only linear combination
of $\mathbf{s}$ and $\mathbf{t}$ with this property. Now, since $\mathbf{s}$ contains odd coordinates, this boundary pattern bounds a surface if and only if $\operatorname{F}_{n-5}$ 
is even, or equivalently $n \equiv 2 \bmod 3$. In this case $\operatorname{F}_{n-4}$ is odd and since the boundary pattern is connected it bounds a M\"obius strip
which will be denoted by $\operatorname{M}_{n}$. If, on the other hand $n \equiv 0,1 \bmod 3$, the boundary pattern  
$$2\operatorname{F}_{n-4} \, \mathbf{t} + \operatorname{F}_{n-5} \,\mathbf{s}$$
has two connected components and since $2 \operatorname{F}_{n-4}$ is always even, it bounds a cylinder $\operatorname{Cyl}_{n}$.

\subsection*{Proof of Theorem \ref{thm:closedLargeCoords}}

In the following, we give a full proof of Theorem~\ref{thm:boundedLargeCoords} 
from Section~\ref{sec:largeCoords}, which we restate below. 

\begin{theorem}
	There is a family $\mathcal{C}_{n}$ of closed $1$-vertex triangulations with $n$ tetrahedra, $n \geq 5$, each 
	containing a vertex normal surface with maximum coordinate at least $\operatorname{F}_{n-3}$
	if $n \equiv 2 \bmod 3$ or at least $2\operatorname{F}_{n-3}$ otherwise.
\end{theorem}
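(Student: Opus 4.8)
The plan is to build $\mathcal{C}_n$ by gluing the layered solid torus $\mathcal{B}_{n-4}$ of Theorem~\ref{thm:boundedLargeCoords} to the $4$-tetrahedron triangulation $\mathcal{E}$ along their boundary tori. Since $\partial\mathcal{E}=\partial\mathcal{B}_m$ is the same one-vertex torus with three edges for every $m$, this gluing makes sense and produces a triangulation with $(n-4)+4=n$ tetrahedra, defined for all $n\ge5$. First I would check that $\mathcal{C}_n$ is a closed $1$-vertex $3$-manifold triangulation: $\mathcal{B}_{n-4}$ and $\mathcal{E}$ are $1$-vertex triangulations of manifolds with torus boundary, so each has a single vertex whose link is a disc; the gluing identifies the two vertices and attaches the two link discs along their common boundary circle, yielding one vertex whose link is a sphere. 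Hence $\mathcal{C}_n$ is closed and $1$-vertex. Its face pairing graph (for $n=7$) is the one displayed in Figure~\ref{fig:face_pairing_graphs}.

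Next I would exhibit the required vertex normal surface. By the boundary-pattern computation in the discussion of $\mathcal{E}$, the normal surfaces $\operatorname{F}_{n-4}\mathbf{t}+\tfrac{1}{2}\operatorname{F}_{n-5}\mathbf{s}$ and $2\operatorname{F}_{n-4}\mathbf{t}+\operatorname{F}_{n-5}\mathbf{s}$ of $\mathcal{E}$ have boundary patterns on $\partial\mathcal{E}$ equal to the boundary pattern of $\mathbf{d}_{n-4}$ and to twice that pattern, respectively. If $n\equiv2\bmod3$ then $\operatorname{F}_{n-5}$ is even, so $\operatorname{M}_n:=\operatorname{F}_{n-4}\mathbf{t}+\tfrac{1}{2}\operatorname{F}_{n-5}\mathbf{s}$ is an integer admissible vector (a M\"obius strip, since $\mathbf{s}$ and $\mathbf{t}$ are compatible); matching $\mathbf{d}_{n-4}$ (a disc) to $\operatorname{M}_n$ across the gluing torus produces a closed normal surface $\mathbf{w}_n$ of $\mathcal{C}_n$ with $\chi(\mathbf{w}_n)=1$, a projective plane. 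If $n\equiv0,1\bmod3$ I would instead take $\operatorname{Cyl}_n:=2\operatorname{F}_{n-4}\mathbf{t}+\operatorname{F}_{n-5}\mathbf{s}$ (a cylinder) and match it to $2\mathbf{d}_{n-4}$ (two parallel copies of the disc), producing a closed normal surface $\mathbf{w}_n$ with $\chi(\mathbf{w}_n)=2$, a sphere. In either case $\mathbf{w}_n$ satisfies all matching equations and quadrilateral constraints (inside $\mathcal{B}_{n-4}$ by Theorem~\ref{thm:boundedLargeCoords}, inside $\mathcal{E}$ by compatibility of $\mathbf{s}$ and $\mathbf{t}$, and across the torus by the boundary-pattern computation), and the coordinate of $\mathbf{w}_n$ inherited from the maximum coordinate of $\mathbf{d}_{n-4}$ (resp.\ $2\mathbf{d}_{n-4}$) is $\operatorname{F}_{n-3}$ (resp.\ $2\operatorname{F}_{n-3}$); in particular the maximum coordinate of $\mathbf{w}_n$ is at least $\operatorname{F}_{n-3}$ (resp.\ $2\operatorname{F}_{n-3}$).

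It then remains to prove that $\mathbf{w}_n$ is a vertex normal surface of $\mathcal{C}_n$, by a two-piece version of the rigidity arguments in the proofs of Lemma~\ref{lem:treeLemma} and Theorem~\ref{thm:boundedLargeCoords}. Suppose $\mathbf{w}_n=\lambda\mathbf{p}+\mu\mathbf{q}$ with $\mathbf{p},\mathbf{q}$ normal surfaces of $\mathcal{C}_n$ and $\lambda,\mu$ positive rationals. Restricting all normal coordinates to the $n-4$ tetrahedra forming the copy of $\mathcal{B}_{n-4}$, the restriction of $\mathbf{w}_n$ equals $\mathbf{d}_{n-4}$ (resp.\ $2\mathbf{d}_{n-4}$), which lies on an extremal ray of the solution cone of $\mathcal{B}_{n-4}$ by Theorem~\ref{thm:boundedLargeCoords}; hence $\mathbf{p}$ and $\mathbf{q}$ restrict to non-negative rational multiples of $\mathbf{d}_{n-4}$, and in particular their boundary arc patterns along the gluing torus are the corresponding multiples of that of $\mathbf{d}_{n-4}$. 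Restricting instead to the four tetrahedra of $\mathcal{E}$, the surfaces $\mathbf{p}$ and $\mathbf{q}$ become normal surfaces of $\mathcal{E}$ whose boundary patterns on $\partial\mathcal{E}$ are proportional to the boundary pattern of $\mathbf{d}_{n-4}$. The key claim is that the only such normal surface of $\mathcal{E}$, up to scaling, is the corresponding multiple of $\operatorname{M}_n$ (resp.\ $\operatorname{Cyl}_n$); granting this, $\mathbf{p}$ and $\mathbf{q}$ agree with scalar multiples of $\mathbf{w}_n$ both inside $\mathcal{B}_{n-4}$ and inside $\mathcal{E}$, hence throughout $\mathcal{C}_n$, so $\mathbf{w}_n$ spans an extremal ray. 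Finally $\mathbf{w}_n$ is the minimal integer point on this ray: in the projective plane case the coordinates on the $\mathcal{B}_{n-4}$ side are those of the vertex normal surface $\mathbf{d}_{n-4}$ and so have no common factor, and in the sphere case the $\mathcal{E}$ side contributes an odd coordinate (since $\operatorname{F}_{n-5}$ is odd and $\mathbf{s}$ has odd coordinates) while the $\mathcal{B}_{n-4}$ side contributes coordinates whose gcd is $2$. Thus $\mathbf{w}_n$ is a vertex normal surface, with maximum coordinate at least $\operatorname{F}_{n-3}$ (resp.\ $2\operatorname{F}_{n-3}$), which is the assertion of the theorem.

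The main obstacle is the key claim above: within $\mathcal{E}$, a normal surface is determined up to scaling by having boundary pattern proportional to that of $\mathbf{d}_{n-4}$. This upgrades the observation in the discussion of $\mathcal{E}$ (that among positive combinations of $\mathbf{s}$ and $\mathbf{t}$ the boundary pattern determines the surface) to a statement about \emph{all} normal surfaces of $\mathcal{E}$, and it cannot be reduced to a single finite check because the boundary pattern varies with $n$ through the Fibonacci numbers. I would establish it by peeling the four tetrahedra of $\mathcal{E}$ inward from $\partial\mathcal{E}$: using the explicit gluing table of $\mathcal{E}$, the quadrilateral constraints, and the fact that the boundary pattern has a vanishing arc-type entry on the face $\partial_1$, one shows that at each tetrahedron the matching equations force the normal pieces uniquely from the arcs already determined on its faces glued to previously processed tetrahedra — the same mechanism that makes a meridian disc the unique normal surface with a given boundary pattern in a layered solid torus, applied here to the fixed small triangulation $\mathcal{E}$.
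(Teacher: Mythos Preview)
Your outline is essentially the paper's proof: build $\mathcal{C}_n$ by gluing $\mathcal{B}_{n-4}$ to $\mathcal{E}$, form $\mathbf{w}_n$ as $\mathbf{d}_{n-4}\cup\operatorname{M}_n$ or $2\mathbf{d}_{n-4}\cup\operatorname{Cyl}_n$, and prove extremality by restricting a putative decomposition to $\mathcal{B}_{n-4}$ (where $\mathbf{d}_{n-4}$ is extremal) and then arguing uniqueness of the extension inside $\mathcal{E}$. Your additional checks (the vertex-link argument for closedness, and the explicit minimality-on-the-ray argument via an odd coordinate of $\mathbf{s}$) are fine and slightly more detailed than the paper.

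The one place where you diverge from the paper is the ``key claim'' about uniqueness inside $\mathcal{E}$, and here you make your own life harder than necessary. You assert that this \emph{cannot} be reduced to a single finite check because the boundary pattern depends on $n$, and you therefore propose an ad hoc peeling argument through the four tetrahedra that you do not actually carry out. In fact the paper does reduce it to a finite check: one enumerates once and for all the thirteen vertex normal surfaces of the fixed $4$-tetrahedron triangulation $\mathcal{E}$ (e.g.\ with \texttt{Regina}), and then uses quadrilateral constraints and compatibility to see that any normal surface of $\mathcal{E}$ whose boundary pattern is a multiple of $\partial\mathbf{d}_{n-4}$ must be a non-negative combination of $\mathbf{s}$, $\mathbf{t}$, and the unique closed vertex normal surface; the closed surface is incompatible with $\mathbf{t}$, and since the boundary patterns of $\mathbf{s}$ and $\mathbf{t}$ are linearly independent the coefficients are determined by the boundary. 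None of these facts depends on $n$, so the whole step is a single finite verification on $\mathcal{E}$. Your peeling idea could presumably be made to work as well, but it is both more laborious and, as written, only a sketch; the enumeration route is what actually closes the argument.
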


\begin{proof}
	We will construct the family $\mathcal{C}_{n}$ by gluing $\mathcal{B}_{n-4}$ and $\mathcal{E}$ along their boundary 
	components. Then, if $n \equiv 2 \bmod 3$, the meridian disc 
	$\mathbf{d}_{n-4}$ glued with $\operatorname{M}_{n}$ yields a vertex normal projective 
	plane or, if $n \equiv 0,1 \bmod 3$, twice $\mathbf{d}_{n-4}$ glued
	with $\operatorname{Cyl}_{n}$ yields a vertex normal 
	sphere of $\mathcal{C}_{n}$ and the maximum normal coordinates are as stated. We will call these surfaces $\mathcal{S}_{n}$.

	\medskip
	It remains to show that $\mathcal{S}_{n}$ is a vertex normal surface of $\mathcal{C}_{n}$ for all $n \geq 5$. To see this, recall that
	$\mathbf{d}_{n-4}$ is a vertex normal surface of $\mathcal{B}_{n-4}$ for all $n \geq 5$. Hence, it suffices to show that 
	$\mathbf{d}_{n}$ ($2 \mathbf{d}_{n}$) has a unique extension in $\mathcal{C}_{n}$ yielding $\mathcal{S}_{n}$ if 
	$n \equiv 2 \bmod 3$ ($n \equiv 0,1 \bmod 3$) and
	that $\mathcal{S}_{n}$ has the smallest integer normal coordinates amongst all rational multiples of $\mathcal{S}_{n}$.

	From the section above we know that there is at least one linear
    combination of vertex normal surfaces in $\mathcal{E}$ realising 
	a valid extension of $\mathbf{d}_{n}$ or $2\mathbf{d}_{n}$ yielding $\mathcal{S}_{n}$. Using the classification 
	of all vertex normal 
	surfaces of $\mathcal{E}$ we can see that due to conflicting quadrilateral constraints and matching equations there are only
	four vertex normal surfaces in $\mathcal{E}$ which may occur in such a linear combination. Due to further compatibility
	constraints such a linear combination can be shown to be a linear combination of $\mathbf{s}$ and 
	$\mathbf{t}$ and the only closed vertex
	normal surfaces of $\mathcal{E}$. Following the observations made in the section above it follows that there is no other
	linear combination of vertex normal surfaces and hence the extension of 
	$\mathbf{d}_{n}$ ($2\mathbf{d}_{n}$) in $\mathcal{E}$ is unique.

	Altogether it follows that $\mathcal{S}_{n}$ is a vertex normal surface.
\end{proof}

\subsection*{An upper bound for $\sigma (\tri)$}

The following statement (mentioned in Section~\ref{sec:expt})
is well known but does not appear in the literature, and so we give the
simple proof here.

\setcounter{theorem}{10} 
\begin{lemma}
	\label{lem:upperBound}
	Let $\tri$ be a triangulation with $n$ tetrahedra. Then
	$\sigma (\tri) \leq 64^n$.
\end{lemma}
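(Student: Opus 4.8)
The plan is to bound $\sigma(\tri)$ by counting the possible \emph{supports} (the sets of normal coordinates on which a surface is nonzero), using that the support of a vertex normal surface determines it, and that the quadrilateral constraints heavily restrict which supports can occur.

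First I would recall from Section~\ref{sec:prelim} that each vertex normal surface $\mathbf{s}$ spans an extremal ray of the solution cone $C = \{x \in \R_{\ge 0}^{7n} : x \text{ satisfies the matching equations}\}$ and is the minimal integer point on that ray; in particular $\mathbf{s}$ is determined by the ray it spans. Let $S(\mathbf{s}) = \{j : s_j > 0\}$ be its support, and let $L(\mathbf{s}) \subseteq \R^{7n}$ be the linear subspace cut out by the matching equations together with the equations $x_j = 0$ for $j \notin S(\mathbf{s})$. Since $C$ is pointed, the standard characterisation of extremal rays of a cone defined by homogeneous linear equalities and inequalities says that a nonzero vector spans an extremal ray of $C$ precisely when the corresponding subspace is one-dimensional; hence $L(\mathbf{s})$ is a line, and the ray spanned by $\mathbf{s}$, and so $\mathbf{s}$ itself, is recovered from $S(\mathbf{s})$ alone. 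Consequently distinct vertex normal surfaces have distinct supports, so $\sigma(\tri)$ is at most the number of sets $S \subseteq \{1,\dots,7n\}$ that arise as such supports.

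It then remains to count the admissible supports. Group the $7n$ coordinates into $n$ blocks of $7$, one per tetrahedron, each block consisting of $4$ triangle coordinates and $3$ quadrilateral coordinates. Every normal surface — in particular every vertex normal surface — satisfies the quadrilateral constraints, so $S(\mathbf{s})$ meets the three quadrilateral coordinates of each block in at most one index. Thus, within a single block, the restricted support is an arbitrary subset of the $4$ triangle coordinates ($2^4$ choices) together with at most one of the $3$ quadrilateral coordinates ($1 + 3 = 4$ choices), giving at most $64$ possibilities; multiplying over the $n$ blocks yields at most $64^n$ admissible supports, and therefore $\sigma(\tri) \le 64^n$.

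The argument is essentially bookkeeping, so there is no serious obstacle. The one point that needs care — and the closest thing to a difficulty — is the invocation of the structure of extremal rays of a pointed polyhedral cone: namely that the list of coordinates forced to zero pins down an extremal ray uniquely. I would justify this cleanly by a rank argument on the constraints that are tight at $\mathbf{s}$ (the matching equations, always tight, together with $x_j = 0$ for $j \notin S(\mathbf{s})$), rather than treat it as self-evident.
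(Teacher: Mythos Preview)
Your proof is correct and follows essentially the same approach as the paper: both bound $\sigma(\tri)$ by the number of possible zero sets (equivalently, supports) compatible with the quadrilateral constraints, obtaining $4 \cdot 2^4 = 64$ choices per tetrahedron. The only difference is that the paper cites \cite{burton11-asymptotic} for the fact that a vertex normal surface is determined by its zero set, whereas you supply a direct argument via the standard characterisation of extremal rays of a pointed polyhedral cone; this is a presentational rather than a substantive difference.
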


\begin{proof}
	It is known that each vertex normal surface is uniquely defined by
    its zero set,
	i.\ e., the set of normal coordinates which are zero (see \cite{burton11-asymptotic}).
	Let $(t_0,t_1,t_2,t_3 \, | \, q_{01}, q_{02}, q_{03})$ be the set of normal coordinates of a tetrahedron 
	of $\tri$. Then each of the $t_i$ can be either zero or non-zero which leaves us with $2^4 = 16$ 
	choices, and at most one of the $q_{0j}$ can be non-zero due to the
    quadrilateral constraints which leaves
	us with $4$ additional choices. All together we have $4 \cdot 2^4 = 64$ distinct zero sets 
	per tetrahedron which yields the result.
\end{proof}

\end{document}